\documentclass[12pt,a4paper,twoside,english,american]{scrartcl}
\usepackage{helvet}
\usepackage[T1]{fontenc}
\usepackage[latin9]{inputenc}
\setcounter{tocdepth}{5}
\setlength{\parskip}{\medskipamount}
\setlength{\parindent}{0pt}
\usepackage{amsmath}
\usepackage{amssymb}

\makeatletter


 \usepackage{amsthm}
 \theoremstyle{plain}
\newtheorem{thm}{Theorem}[section]
  \theoremstyle{plain}
  \newtheorem{lem}[thm]{Lemma}
  \theoremstyle{definition}
  \newtheorem{defn}[thm]{Definition}
  \theoremstyle{remark}
  \newtheorem{rem}[thm]{Remark}
  \theoremstyle{plain}
  \newtheorem{prop}[thm]{Proposition}


\usepackage[T1]{fontenc}    

\usepackage{a4wide}        
\addtolength{\headheight}{2pt}
\usepackage{tu-preprint}
\usepackage{amsmath}

\usepackage{euscript}
\usepackage{mathabx}
\allowdisplaybreaks[1] 
\usepackage{amsfonts}
\newenvironment{keywords}{ \noindent\footnotesize\textbf{Keywords and phrases:}}{}

\newenvironment{class}{\noindent\footnotesize\textbf{Mathematics subject classification 2000:}}{}

\usepackage{color,tu-preprint}




\newcommand*{\dive}{\operatorname{div}}

\newcommand*{\grad}{\operatorname{grad}}

\renewcommand*{\i}{\mathrm{i}}




\DeclareMathAccent{\Circ}{\mathalpha}{operators}{"17}
\newcommand{\interior}[1]{\Circ{#1}}
\renewcommand{\Im}{\operatorname{\mathfrak{Im}}}
\renewcommand{\Re}{\operatorname{\mathfrak{Re}}}


\newcommand*{\conv}[1]{\overset{#1\to \infty}{\to }}

\renewcommand*{\epsilon}{\varepsilon}
\renewcommand*{\rho}{\varrho}

\arraycolsep2pt

\makeatother

\usepackage{babel}
\begin{document}
\selectlanguage{english}%
\institut{Institut f\"ur Analysis}

\preprintnumber{MATH-AN-08-2012}

\preprinttitle{A Class of Evolutionary Problems with an Application to Acoustic Waves with Impedance Type Boundary Conditions.}

\author{Rainer Picard}

\makepreprinttitlepage

\selectlanguage{american}%
\setcounter{section}{-1}

\date{}

\title{A Class of Evolutionary Problems with an Application to Acoustic
Waves with Impedance Type Boundary Conditions.}

\author{Rainer Picard\\
Institut für Analysis,Fachrichtung Mathematik\\
 Technische Universität Dresden\\
 Germany\\
 rainer.picard@tu-dresden.de\\
\\
January 7, 2011  }
\maketitle
\begin{abstract}
A class of evolutionary operator equations is studied. As an application
the equations of linear acoustics are considered with complex material
laws. A dynamic boundary condition is imposed which in the time-harmonic
case corresponds to an impedance or Robin boundary condition. Memory
and delay effects in the interior and also on the boundary are built
into the problem class.
\end{abstract}
\begin{keywords}
evolution equations, partial differential equations, acoustics, causality, impedance boundary condition, memory, delay   \end{keywords}

\begin{class}  
{Primary 35F05; Secondary 35L40, 35F10, 76Q05}
\end{class}

\textbf{Dedicated to Prof. Dr. Rolf Leis on the occasion of his 80th
birthday.} 

\tableofcontents{}

\section{Introduction}

In  \cite{Pi2009-1} and \cite{PDE_DeGruyter}, chapter 5, a theoretical
framework has been presented to discuss typical linear evolutionary
problems as they arise in various fields of applications. The suitability
of the problem class described for such applications has been demonstrated
by numerous examples of varying complexity. The problem class can
be heuristically described as: finding $U,V$ satisfying
\[
\partial_{0}V+AU=f,
\]
where $V$ is linked to $U$ by a linear material law 
\[
V=\mathbb{M}U.
\]
Here $\partial_{0}$ denotes the time derivative, $\mathbb{M}$ is
a bounded linear operator commuting with $\partial_{0}$ and $A$
is a (usually) unbounded translation invariant linear operator. The
material law operator $\mathbb{M}$ is given in terms of a suitable
operator valued function of the time derivative $\partial_{0}$ in
the sense of a functional calculus associated with $\partial_{0}$
realized as a normal operator. The focus in the quoted references
is on the case where $A$ is skew-selfadjoint. 

The aim of this paper is to extend the general theory to encompass
an even larger class of problems by allowing $A$ to be more general.
The presentation will rest on a conceptually more elementary version
of this theory as presented in \cite{2009-2}, where the above problem
is discussed as establishing the continuous invertibility of the unbounded
operator sum $\partial_{0}\mathbb{M}+A$, i.e. we shall develop a
solution theory for a class of operator equations of the form $\overline{\partial_{0}\mathbb{M}+A}\: U=f.$
For suggestiveness of notation we shall simply write $\partial_{0}\mathbb{M}+A$
instead of $\overline{\partial_{0}\mathbb{M}+A}$, which can indeed
be made rigorous in a suitable distributional sense. 

To exemplify the utility of the generalization we shall apply the
ideas developed here to impedance type boundary conditions in linear
acoustics.

After briefly describing the corner stones of a general solution theory
in section \ref{sec:General-Solution-Theory} we shall discuss in
section \ref{sec:Causality-and-Memory} the particular issue of causality,
which is a characteristic feature of problems we may rightfully call
\emph{evolutionary}.

The general findings will be illustrated by an application to to acoustic
equations with a dynamic boundary condition allowing for additional
memory effects on the boundary of the underlying domain. We refer
to the boundary condition as of \emph{impedance type} due to its form
after Fourier-Laplace transformation with respect to time. The reasoning
in \cite{PIC_2010:1889} finds its generalization in the arguments
presented here in so far as here evolutionary boundary conditions
modelling a separate dynamics on the boundary are included.

\section{\label{sec:General-Solution-Theory}General Solution Theory}

First we specify the space and the class of material law operators
we want to consider. 

\textbf{Assumptions on the material law operator}: Let $M=\left(M\left(z\right)\right)_{z\in B_{\mathbb{C}}\left(r,r\right)}$
be a family of uniformly bounded linear operators in a Hilbert space
$H$ holomorphic in the ball $B_{\mathbb{C}}\left(r,r\right)$ of
radius $r$ centered at $r$. Then, for $\rho>\frac{1}{2r}$, we define
\begin{align*}
\mathbb{M}\;:=M\left(\partial_{0}^{-1}\right) & ,
\end{align*}
where 
\[
M\left(\partial_{0}^{-1}\right)\;:=\mathbb{L}_{\rho}^{*}M\left(\frac{1}{\mathrm{i}\, m_{0}+\rho}\right)\mathbb{L}_{\rho}.
\]

Here $\mathbb{L}_{\rho}:H_{\rho}\left(\mathbb{R},H\right)\to H_{0}\left(\mathbb{R},H\right)$,
$\rho\in\mathbb{R}_{\geq0}$, denotes the unitary extension of the
Fourier-Laplace transform to $H_{\rho}\left(\mathbb{R},H\right)$,
the space of $H$-valued $L^{2,\mathrm{loc}}$-functions $f$ on $\mathbb{R}$
with
\[
\left|f\right|_{\rho,0,0}\;:=\sqrt{\int_{\mathbb{R}}\left|f\left(t\right)\right|_{H}^{2}\,\exp\left(-2\rho t\right)\: dt}<\infty.
\]
The Fourier-Laplace transform is given by
\[
\left(\mathbb{L}_{\rho}\varphi\right)\left(s\right)=\frac{1}{\sqrt{2\pi}}\,\int_{\mathbb{R}}\exp\left(-\i\left(s-\i\rho\right)t\right)\:\varphi\left(t\right)\: dt,\: s\in\mathbb{R},
\]
for $\varphi\in\interior C_{\infty}\left(\mathbb{R},H\right)$, i.e.
for smooth $H$-valued functions $\varphi$ with compact support.
The multiplicatively applied operator $M\left(\frac{1}{\mathrm{i}\, m_{0}+\rho}\right):H_{0}\left(\mathbb{R},H\right)\to H_{0}\left(\mathbb{R},H\right)$
is given by
\[
\left(M\left(\frac{1}{\mathrm{i}\, m_{0}+\rho}\right)\varphi\right)\left(s\right)=M\left(\frac{1}{\mathrm{i}\, s+\rho}\right)\varphi\left(s\right),\: s\in\mathbb{R},
\]
for $\varphi\in\interior C_{\infty}\left(\mathbb{R},H\right)$, (so
that $m_{0}$ simply denotes the multiplication by the argument operator). 

~

$H_{\rho}\left(\mathbb{R},H\right)$ is a Hilbert space with norm
$\left|\:\cdot\:\right|_{\rho,0,0}$. The associated inner product,
assumed to be linear in the \emph{second} factor, will be denoted
by $\left\langle \:\cdot\:|\:\cdot\:\right\rangle _{\rho,0,0}$. In
the case $\rho=0$ the space $H_{\rho}\left(\mathbb{R},H\right)$
is simply the space $L^{2}\left(\mathbb{R},H\right)$ of $H$-valued
$L^{2}$-functions on $\mathbb{R}$. In our general framework there
is, however, a bias to consider large $\rho\in\mathbb{R}_{>0}$.

Note that for $r\in\mathbb{R}_{>0}$
\begin{align*}
B_{\mathbb{C}}\left(r,r\right) & \to\left[\mathrm{i}\,\mathbb{R}\right]+\left[\mathbb{R}_{>1/\left(2r\right)}\right]\\
z & \mapsto z^{-1}
\end{align*}
is a bijection. In $H_{\rho}\left(\mathbb{R},H\right)$ the closure
$\partial_{0}$ of the derivative on $\interior C_{\infty}\left(\mathbb{R},H\right)$
turns out to be a normal operator, see e.g. \cite{2009-2}, with $\Re\left(\partial_{0}\right)=\rho$. 

With the time translation operator $\tau_{h}:H_{\rho}\left(\mathbb{R},H\right)\to H_{\rho}\left(\mathbb{R},H\right)$,
$h\in\mathbb{R}$, given by
\[
\left(\tau_{h}\varphi\right)\left(s\right)=\varphi\left(s+h\right),\: s\in\mathbb{R},
\]
for $\varphi\in\interior C_{\infty}\left(\mathbb{R},H\right)$ it
is easy to see that $M\left(\partial_{0}^{-1}\right)$ is translation
invariant, i.e.
\[
\tau_{h}M\left(\partial_{0}^{-1}\right)=M\left(\partial_{0}^{-1}\right)\tau_{h}\,,\: h\in\mathbb{R}.
\]
This is indeed clear since $M\left(\partial_{0}^{-1}\right)$ commutes
by construction with $\partial_{0}^{-1}$ and 
\[
\tau_{h}=\exp\left(h/\partial_{0}^{-1}\right).
\]

\textbf{Assumptions on $A$:} For the densely defined, closed linear
operator $A$ in $H_{\rho}\left(\mathbb{R},H\right)$ we also assume
commutativity with $\partial_{0}^{-1}$ which implies commutativity
with bounded Borel functions of $\partial_{0}^{-1}$, in particular,
translation invariance
\[
\tau_{h}A=A\tau_{h}\:,\: h\in\mathbb{R}.
\]

We shall use $M\left(\partial_{0}^{-1}\right)$ and $A$ without denoting
a reference to $\rho\in\mathbb{R}_{>0}$. 

Thus we are led to solving 
\[
\left(\partial_{0}M\left(\partial_{0}^{-1}\right)+A\right)U=f
\]
in $H_{\rho}\left(\mathbb{R},H\right)$ with $f\in H_{\rho}\left(\mathbb{R},H\right)$
given. Recall that we have chosen to write $\partial_{0}M\left(\partial_{0}^{-1}\right)+A$
for the closure of sum of the two discontinuous operators involved.
Note that initial data are not explicitely prescribed. It is assumed
that they are built into the source term $f$ so that vanishing initial
data may be assumed. 

\textbf{Condition (Positivity 1\label{Condition-(Positivity I)}):}

\noindent \begin{center}
\begin{minipage}[t]{0.98\columnwidth}%
\textit{For all $U\in D\left(\partial_{0}\right)\cap D\left(A\right)$
and $V\in D\left(\partial_{0}\right)\cap D\left(A^{*}\right)$ we
have uniformly for all sufficiently large $\rho\in\mathbb{R}_{>0}$
some $\beta_{0}\in\mathbb{R}_{>0}$ such that }
\[
\Re\left\langle \chi_{_{\mathbb{R}\leq0}}\left(m_{0}\right)U|\left(\partial_{0}M\left(\partial_{0}^{-1}\right)+A\right)U\right\rangle _{\rho,0,0}\geq\beta_{0}\left\langle \chi_{_{\mathbb{R}\leq0}}\left(m_{0}\right)U|U\right\rangle _{\rho,0,0},
\]
\[
\Re\left\langle V|\left(\partial_{0}^{*}M^{*}\left(\left(\partial_{0}^{-1}\right)^{*}\right)+A^{*}\right)V\right\rangle _{\rho,0,0}\geq\beta_{0}\left\langle V|V\right\rangle _{\rho,0,0}.
\]
\end{minipage}
\par\end{center}

Here we have used the notation
\[
M^{*}\left(z\right)\::=M\left(z^{*}\right)^{*}
\]
with which we have 
\begin{eqnarray*}
M\left(\partial_{0}^{-1}\right)^{*} & = & M^{*}\left(\left(\partial_{0}^{-1}\right)^{*}\right).
\end{eqnarray*}

Note that due to translation invariance \textbf{Condition (Positivity
\ref{Condition-(Positivity I)})} is equivalent to \textbf{Condition
(Positivity 2\label{Condition-(Positivity II)})}%
\footnote{\noindent This is the assumption employed in \cite{PIC_2010:1889}.
The second inequality in the corresponding assumption in \cite{PIC_2010:1889}
erroneously also contains the cut-off multiplier $\chi_{_{\mathbb{R}\leq a}}\left(m_{0}\right)$
which should not be there and is indeed never utilized in the arguments.%
}:

\noindent \begin{center}
\begin{minipage}[t]{0.98\columnwidth}%
\textit{For all $a\in\mathbb{R}$ and all $U\in D\left(\partial_{0}\right)\cap D\left(A\right)$,
$V\in D\left(\partial_{0}\right)\cap D\left(A^{*}\right)$ we have,
uniformly for all sufficiently large $\rho\in\mathbb{R}_{>0}$, some
$\beta_{0}\in\mathbb{R}_{>0}$ such that }
\[
\Re\left\langle \chi_{_{\mathbb{R}_{\leq a}}}\left(m_{0}\right)U|\left(\partial_{0}M\left(\partial_{0}^{-1}\right)+A\right)U\right\rangle _{\rho,0,0}\geq\beta_{0}\left\langle \chi_{_{\mathbb{R}\leq a}}\left(m_{0}\right)U|U\right\rangle _{\rho,0,0},
\]
\[
\Re\left\langle V|\left(\partial_{0}^{*}M^{*}\left(\left(\partial_{0}^{-1}\right)^{*}\right)+A^{*}\right)V\right\rangle _{\rho,0,0}\geq\beta_{0}\left\langle V|V\right\rangle _{\rho,0,0}.
\]
\end{minipage}
\par\end{center}

Here $\chi_{_{M}}$ denotes the characteristic function of the set
$M$. As a general notation we introduce for every measurable function
$\psi$ the associated multiplication operator 
\[
\psi\left(m_{0}\right):H_{\rho}\left(\mathbb{R},H\right)\to H_{\rho}\left(\mathbb{R},H\right)
\]
determined by
\[
\left(\psi\left(m_{0}\right)f\right)\left(t\right)\;:=\psi\left(t\right)\: f\left(t\right),\: t\in\mathbb{R},
\]
for $f\in\interior C_{\infty}\left(\mathbb{R},H\right)$.Letting $a$
go to $\infty$ in this leads to \textbf{Condition (Positivity 3\label{Condition-(Positivity III)})}:

\noindent \begin{center}
\begin{minipage}[t]{0.98\columnwidth}%
\textit{For all $U\in D\left(\partial_{0}\right)\cap D\left(A\right)$
and $V\in D\left(\partial_{0}\right)\cap D\left(A^{*}\right)$ we
have, uniformly for all sufficiently large $\rho\in\mathbb{R}_{>0}$,
some $\beta_{0}\in\mathbb{R}_{>0}$ such that }
\[
\Re\left\langle U|\left(\partial_{0}M\left(\partial_{0}^{-1}\right)+A\right)U\right\rangle _{\rho,0,0}\geq\beta_{0}\left\langle U|U\right\rangle _{\rho,0,0},
\]
\[
\Re\left\langle V|\left(\partial_{0}^{*}M^{*}\left(\left(\partial_{0}^{-1}\right)^{*}\right)+A^{*}\right)V\right\rangle _{\rho,0,0}\geq\beta_{0}\left\langle V|V\right\rangle _{\rho,0,0}.
\]
\end{minipage}
\par\end{center}

This is the constraint we have used in previous work. In the earlier
considered cases the seemingly stronger \textbf{Condition (Positivity
\ref{Condition-(Positivity II)})} can be shown to hold. It turns
out, however, that in the translation invariant case \textbf{Condition
(Positivity \ref{Condition-(Positivity I)})} adds flexibility to
the solution theory (to include more general cases) and makes the
issue of causality more easily accessible. 

In preparation of our well-posedness result we need the following
lemma. Note that for sake of clarity here we do distinguish between
the natural sum $A+B$ and its closure.
\begin{lem}
Let $A+B$ and $A^{*}+B^{*}$ be densely defined. $\left(P_{n}\right)_{n\in\mathbb{N}}$
a monotone sequence of orthogonal projectors commuting with $A$ and
$B$ with $P_{n}\conv{n}1$ strongly, such that $\left(P_{n}BP_{n}\right)_{n\in\mathbb{N}}$
is a sequence of con\-ti- nu\-ous linear operators. Then\vspace{-0.3cm}
\[
\left(P_{n}AP_{n}\right)^{*}=P_{n}A^{*}P_{n},\:\left(P_{n}BP_{n}\right)^{*}=P_{n}B^{*}P_{n}
\]
 \vspace{-0.3cm}for every $n\in\mathbb{N}$. Moreover,
\end{lem}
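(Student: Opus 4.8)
The plan is to reduce everything to the elementary fact that an orthogonal projector commuting with a closed densely defined operator reduces it. Fixing $n$ and writing $P:=P_{n}$, I would first note that $PA\subseteq AP$ forces, for $u\in D(A)$, both $Pu\in D(A)$ with $APu=PAu$ and $(1-P)u\in D(A)$ with $A(1-P)u=(1-P)Au$; hence $PH$ and $(1-P)H$ are $A$-invariant and $D(A)=(D(A)\cap PH)\oplus(D(A)\cap(1-P)H)$ orthogonally. Taking adjoints in $PA\subseteq AP$ yields $A^{*}P\subseteq PA^{*}$, so $P$ reduces $A^{*}$ as well, and the same statements hold with $A,A^{*}$ replaced by $B,B^{*}$. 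In particular the part $A_{P}$ of $A$ in the Hilbert space $PH$ (domain $D(A)\cap PH$) is closed and densely defined, with Hilbert-space adjoint in $PH$ equal to the part of $A^{*}$ in $PH$; likewise for $B$.

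Next I would compute the adjoints in the orthogonal decomposition $H=PH\oplus(1-P)H$. With the natural domain $D(PAP)=\{u:Pu\in D(A)\}$, which by the reduction property is $(D(A)\cap PH)\oplus(1-P)H$, the operator $PAP$ is block diagonal, $PAP=A_{P}\oplus 0$, and densely defined. Its adjoint is then the block diagonal of the adjoints, $(PAP)^{*}=(A_{P})^{*}\oplus 0=PA^{*}P$. Concretely, if $v\in D((PAP)^{*})$ with $(PAP)^{*}v=w$, then testing against $u\in(1-P)H$ gives $w\in PH$, and testing against $u\in D(A)\cap PH$ gives $\langle A_{P}u,Pv\rangle=\langle u,w\rangle$, i.e. $Pv\in D(A^{*})$ and $A^{*}Pv=w$; the reverse inclusion is the same computation read backwards. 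This yields $(P_{n}AP_{n})^{*}=P_{n}A^{*}P_{n}$. The identity $(P_{n}BP_{n})^{*}=P_{n}B^{*}P_{n}$ is proved verbatim; here the hypothesis that $P_{n}BP_{n}$ be continuous is what guarantees $D(P_{n}BP_{n})=H$ and that $P_{n}B^{*}P_{n}$ is everywhere defined and bounded, so that no passage to closures is needed on either side.

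I expect the only slightly fussy point to be the domain bookkeeping, namely checking that the a priori domain $\{u:P_{n}u\in D(A)\}$ of $P_{n}AP_{n}$ really splits as the orthogonal sum above; this, however, is exactly what the reduction property delivers. As for the \emph{``Moreover''} clause, which is truncated in this excerpt, I anticipate that it transfers these compression identities to the full closed operators --- for instance asserting that $\closure{A+B}$ is the strong graph limit of $\bigl(P_{n}(A+B)P_{n}\bigr)_{n}$ and that $\bigl(\closure{A+B}\bigr)^{*}=\closure{A^{*}+B^{*}}$. The natural strategy is to combine the strong convergence $P_{n}\conv{n}1$ with the identities just established in order to pass between assertions about $A+B$ and about $A^{*}+B^{*}$; the hard part will be upgrading this bare strong convergence to something strong enough (resolvent convergence) that the limits are genuine rather than formal, which is where the uniform positivity bounds of \textbf{Condition (Positivity \ref{Condition-(Positivity I)})} should come into play.
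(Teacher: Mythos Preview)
Your argument for the compression identities $(P_{n}AP_{n})^{*}=P_{n}A^{*}P_{n}$ and $(P_{n}BP_{n})^{*}=P_{n}B^{*}P_{n}$ is correct and in fact cleaner than the paper's. The paper obtains both inclusions by direct inner-product manipulations, first showing that $P_{n}$ commutes with $A^{*}$ and $B^{*}$, then checking $P_{n}A^{*}P_{n}\subseteq(P_{n}AP_{n})^{*}$ and the reverse inclusion separately. Your reduction-theory viewpoint---decompose $H=P_{n}H\oplus(1-P_{n})H$, recognise $P_{n}AP_{n}$ as the block-diagonal operator $A_{P_{n}}\oplus 0$, and take adjoints blockwise---packages the same computations more transparently and makes the domain bookkeeping automatic.

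Where you go astray is in your anticipation of the ``Moreover'' clause. The actual statement is
\[
\overline{A^{*}+B^{*}}=(A+B)^{*}=\mathrm{s}\text{-}\lim_{n\to\infty}\bigl(P_{n}(A+B)P_{n}\bigr)^{*},
\]
and its proof uses \emph{no} positivity hypothesis whatsoever; in particular \textbf{Condition (Positivity \ref{Condition-(Positivity I)})} plays no role here. The argument is elementary: once the compression identities are in hand, one checks (using that $P_{n}BP_{n}$ is bounded, so it can be moved across freely) that for every $z\in D\bigl((A+B)^{*}\bigr)$ one has $P_{n}z\in D(A^{*}+B^{*})$ with
\[
(A^{*}+B^{*})P_{n}z=P_{n}(A+B)^{*}z.
\]
Now simply let $n\to\infty$: the right-hand side converges to $(A+B)^{*}z$ and $P_{n}z\to z$, so closability of $A^{*}+B^{*}$ (guaranteed by the hypothesis that $A+B\subseteq(A^{*}+B^{*})^{*}$ is densely defined) yields $z\in D\bigl(\overline{A^{*}+B^{*}}\bigr)$ and $\overline{A^{*}+B^{*}}\,z=(A+B)^{*}z$. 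The reverse inclusion $\overline{A^{*}+B^{*}}\subseteq(A+B)^{*}$ is trivial. There is no need for resolvent convergence or for any quantitative bound; bare strong convergence of $P_{n}$ together with closability is enough.
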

\[
\overline{A^{*}+B^{*}}=\left(A+B\right)^{*}=\mathrm{s-}\lim_{n\to\infty}\left(P_{n}\left(A+B\right)P_{n}\right)^{*}.
\]

\begin{proof}
We have $\left\langle Ax|y\right\rangle _{H}=\left\langle x|A^{*}y\right\rangle _{H}$
for all $x\in D\left(A\right)$ and so also for $n\in\mathbb{N}$
\[
\left\langle Ax|P_{n}y\right\rangle _{H}=\left\langle P_{n}Ax|y\right\rangle _{H}=\left\langle AP_{n}x|y\right\rangle _{H}=\left\langle P_{n}x|A^{*}y\right\rangle _{H}=\left\langle x|P_{n}A^{*}y\right\rangle _{H}
\]
 for all $x\in D\left(A\right)$. Thus, we find $P_{n}y\in D\left(A^{*}\right)$
and 
\[
A^{*}P_{n}y=P_{n}A^{*}y.
\]
This shows that $P_{n}$ commutes with $A^{*}$ and similarly we find
$P_{n}$ commuting with $B^{*}$. 

From $P_{n}Bx=BP_{n}x$ for $x\in D\left(B\right)$ we get
\begin{align*}
\left\langle x|\left(P_{n}BP_{n}\right)^{*}y\right\rangle _{H} & =\left\langle P_{n}BP_{n}x|y\right\rangle _{H}=\left\langle BP_{n}x|P_{n}y\right\rangle _{H}=\left\langle Bx|P_{n}y\right\rangle _{H}
\end{align*}
 for all $x\in D\left(B\right)$, $y\in H$ and so
\[
\left\langle x|\left(P_{n}BP_{n}\right)^{*}y\right\rangle _{H}=\left\langle P_{n}x|B^{*}P_{n}y\right\rangle _{H}=\left\langle x|P_{n}B^{*}P_{n}y\right\rangle _{H}
\]
proving 
\[
\left(P_{n}BP_{n}\right)^{*}=P_{n}B^{*}P_{n}
\]
for every $n\in\mathbb{N}$.

Let now $y\in D\left(A^{^{*}}+B^{^{*}}\right)$ then $\left\langle \left(A+B\right)x|y\right\rangle _{H}=\left\langle x|\left(A+B\right)^{*}y\right\rangle _{H}$
for all $x\in D\left(A+B\right)$. Since $P_{n}\left(A+B\right)\subseteq\left(A+B\right)P_{n}$
we have
\begin{align*}
\left\langle x|P_{n}\left(A+B\right)^{*}y\right\rangle _{H} & =\left\langle P_{n}x|\left(A+B\right)^{*}y\right\rangle _{H}\\
 & =\left\langle \left(A+B\right)P_{n}x|y\right\rangle _{H}\\
 & =\left\langle P_{n}\left(A+B\right)P_{n}x|y\right\rangle _{H}\\
 & =\left\langle P_{n}AP_{n}x+P_{n}BP_{n}x|y\right\rangle _{H}\\
 & =\left\langle P_{n}AP_{n}x|y\right\rangle _{H}+\left\langle x|P_{n}B^{*}P_{n}y\right\rangle _{H}
\end{align*}
implying $y\in D\left(\left(P_{n}AP_{n}\right)^{*}\right)$ and $\left(P_{n}AP_{n}\right)^{*}y=P_{n}\left(A+B\right)^{*}y-P_{n}B^{*}P_{n}y$.
Thus, we see that 
\begin{align*}
P_{n}\left(A+B\right)^{*}\subseteq & \left(P_{n}AP_{n}\right)^{*}+P_{n}B^{*}P_{n}
\end{align*}
and so, since clearly we have
\[
A^{*}+B^{*}\subseteq\left(A+B\right)^{*},
\]
we have the relations
\begin{equation}
P_{n}A^{*}P_{n}+P_{n}B^{*}P_{n}=P_{n}\left(A^{*}+B^{*}\right)P_{n}\subseteq P_{n}\left(A+B\right)^{*}P_{n}\subseteq\left(P_{n}AP_{n}\right)^{*}+P_{n}B^{*}P_{n}.\label{eq:adjadj-1}
\end{equation}
In particular, this yields
\[
P_{n}A^{*}P_{n}\subseteq\left(P_{n}AP_{n}\right)^{*}
\]
for every $n\in\mathbb{N}$. 

Let now $y\in D\left(\left(P_{n}AP_{n}\right)^{*}\right)$ then
\[
\left\langle P_{n}AP_{n}x|y\right\rangle _{H}=\left\langle x|\left(P_{n}AP_{n}\right)^{*}y\right\rangle _{H}
\]
for all $x\in D\left(P_{n}AP_{n}\right).$ In particular, for $x\in D\left(A\right)$
we get
\[
\left\langle Ax|P_{n}y\right\rangle _{H}=\left\langle P_{n}Ax|y\right\rangle _{H}=\left\langle P_{n}AP_{n}x|y\right\rangle _{H}=\left\langle x|\left(P_{n}AP_{n}\right)^{*}y\right\rangle _{H}
\]
showing that $P_{n}y\in D\left(A^{*}\right)$ and 
\[
A^{*}P_{n}y=\left(P_{n}AP_{n}\right)^{*}y.
\]
Moreover,
\begin{align*}
\left\langle x|\left(P_{n}AP_{n}\right)^{*}y\right\rangle _{H} & =\left\langle P_{n}AP_{n}x|y\right\rangle _{H}=\left\langle P_{n}x|A^{*}P_{n}y\right\rangle _{H}\\
 & =\left\langle x|P_{n}A^{*}P_{n}y\right\rangle _{H}
\end{align*}
for all $x\in D\left(A\right)$ proving that
\[
\left(P_{n}AP_{n}\right)^{*}\subseteq P_{n}A^{*}P_{n}.
\]
 Thus, we also have 
\[
\left(P_{n}AP_{n}\right)^{*}=P_{n}A^{*}P_{n}.
\]
 From (\ref{eq:adjadj-1}) we obtain now
\begin{eqnarray*}
P_{n}\left(A+B\right)^{*}\subseteq P_{n}A^{*}P_{n}+P_{n}B^{*}P_{n} & = & P_{n}\left(A^{*}+B^{*}\right)P_{n}\\
 &  & =P_{n}\left(A+B\right)^{*}P_{n}=\left(P_{n}AP_{n}\right)^{*}+P_{n}B^{*}P_{n}.
\end{eqnarray*}
I.e. for $z\in\left(A+B\right)^{*}$ we have
\begin{align*}
P_{n}\left(A+B\right)^{*}z & =P_{n}\left(A^{*}+B^{*}\right)P_{n}z\,,\\
 & =\left(A^{*}+B^{*}\right)P_{n}z.
\end{align*}
Since $P_{n}\conv{n}1$ and from the closability of $A^{*}+B^{*}$
follows $z\in D\left(\overline{A^{*}+B^{*}}\right)$ and 
\[
\overline{A^{*}+B^{*}}z=\left(A+B\right)^{*}z.
\]
Thus we have indeed shown that 
\[
\left(A+B\right)^{*}=\overline{A^{*}+B^{*}}.
\]

\end{proof}
This lemma will be crucial in the proof of our solution theorem.
\begin{thm}
(Solution Theory) Let $A$ and $M$ be as above and satisfy \textbf{Condition
(Positivity \ref{Condition-(Positivity I)})}. Then for every $f\in H_{\rho}\left(\mathbb{R},H\right)$,
$\rho\in\mathbb{R}_{>0}$ sufficiently large, there is a unique solution
$U\in H_{\rho}\left(\mathbb{R},H\right)$ of 
\[
\left(\partial_{0}M\left(\partial_{0}^{-1}\right)+A\right)U=f.
\]
The solution depends continuously on the data in the sense that
\[
\left|U\right|_{\rho,0,0}\leq\beta_{0}^{-1}\left|f\right|_{\rho,0,0}
\]
uniformly for all $f\in H_{\rho}\left(\mathbb{R},H\right)$ and $\rho\in\mathbb{R}_{>0}$
sufficiently large. \end{thm}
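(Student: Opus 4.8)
The plan is to prove that the closed operator $C:=\overline{\partial_{0}M(\partial_{0}^{-1})+A}$ is boundedly invertible on $H_{\rho}(\mathbb{R},H)$ for all sufficiently large $\rho$, with $\left|C^{-1}\right|\le\beta_{0}^{-1}$; the three assertions of the theorem (existence, uniqueness, continuous dependence) then follow at once by setting $U=C^{-1}f$. The standard route is: (i) establish the lower bound $\left|CU\right|_{\rho,0,0}\ge\beta_{0}\left|U\right|_{\rho,0,0}$ for all $U\in D(C)$, which makes $C$ injective with closed range; and (ii) establish the same lower bound for the adjoint $C^{*}$, which makes $C^{*}$ injective, so that $\overline{R(C)}=N(C^{*})^{\perp}=H_{\rho}(\mathbb{R},H)$, whence, together with (i), $R(C)=H_{\rho}(\mathbb{R},H)$. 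Since only \textbf{Condition (Positivity 1)} is hypothesised, I would first record that by translation invariance it is equivalent to \textbf{Condition (Positivity 2)}, and that letting $a\to\infty$ there (using $\chi_{\mathbb{R}_{\le a}}(m_{0})\to1$ strongly) gives \textbf{Condition (Positivity 3)}; it is the latter, with its $\rho$-uniform constant $\beta_{0}$, that the argument actually uses.

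For step (i) I would apply the first inequality of \textbf{Condition (Positivity 3)} on the natural domain $D(\partial_{0})\cap D(A)$ of $\partial_{0}M(\partial_{0}^{-1})+A$, which is a core for $C$: from $\beta_{0}\left|U\right|_{\rho,0,0}^{2}\le\Re\langle U\,|\,(\partial_{0}M(\partial_{0}^{-1})+A)U\rangle_{\rho,0,0}\le\left|U\right|_{\rho,0,0}\,\left|(\partial_{0}M(\partial_{0}^{-1})+A)U\right|_{\rho,0,0}$ and Cauchy--Schwarz one reads off the lower bound on the core, and it passes to the closure $C$ by a routine limiting argument. One must also note that $D(\partial_{0})\cap D(A)$ really is dense: e.g.\ $n(n+\partial_{0})^{-1}$ maps $D(A)$ into $D(\partial_{0})\cap D(A)$ (it is a bounded Borel function of $\partial_{0}^{-1}$, hence commutes with $A$) and converges strongly to the identity; the same reasoning, using that $A^{*}$ commutes with $\partial_{0}^{-1}$ as well, gives density of $D(\partial_{0})\cap D(A^{*})=D(\partial_{0}^{*})\cap D(A^{*})$.

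For step (ii) the crucial point is that one may \emph{not} naively replace $C^{*}$ by $\partial_{0}^{*}M^{*}((\partial_{0}^{-1})^{*})+A^{*}$ --- this identification is precisely what the preparatory Lemma supplies. I would apply the Lemma with $A$ as given and $B:=\partial_{0}M(\partial_{0}^{-1})$, taking $P_{n}:=\mathbb{L}_{\rho}^{*}\chi_{[-n,n]}(m_{0})\mathbb{L}_{\rho}$, the projection onto functions whose Fourier--Laplace transform is supported in $[-n,n]$. These are monotone, converge strongly to $1$, are bounded Borel functions of $\partial_{0}^{-1}$ and hence commute with $A$, commute with $\partial_{0}$ and with $M(\partial_{0}^{-1})$ and therefore with $B$, and $P_{n}BP_{n}=(P_{n}\partial_{0})M(\partial_{0}^{-1})P_{n}$ is bounded since $\partial_{0}$ is bounded on $P_{n}H_{\rho}(\mathbb{R},H)$. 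The Lemma then yields $C^{*}=(\partial_{0}M(\partial_{0}^{-1})+A)^{*}=\overline{\partial_{0}^{*}M^{*}((\partial_{0}^{-1})^{*})+A^{*}}$, whose core is exactly $D(\partial_{0}^{*})\cap D(A^{*})=D(\partial_{0})\cap D(A^{*})$. On that core the second inequality of \textbf{Condition (Positivity 3)} applies, and as in step (i) it gives $\left|C^{*}V\right|_{\rho,0,0}\ge\beta_{0}\left|V\right|_{\rho,0,0}$ for all $V\in D(C^{*})$.

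Combining (i) and (ii), $C$ is injective with closed and dense --- hence full --- range, so $C^{-1}\colon H_{\rho}(\mathbb{R},H)\to H_{\rho}(\mathbb{R},H)$ exists and is bounded by $\beta_{0}^{-1}$, uniformly for all sufficiently large $\rho$ by the uniformity built into the positivity condition; this is the claimed well-posedness, and $\left|U\right|_{\rho,0,0}\le\beta_{0}^{-1}\left|f\right|_{\rho,0,0}$ is immediate. I expect the main obstacle to be step (ii): the whole argument hinges on knowing that the adjoint of the closed sum coincides with the closure of the sum of the adjoints, so that the second positivity estimate genuinely controls $C^{*}$ and not merely some proper restriction of it --- which is exactly why the Lemma, and the availability of commuting cut-off projections $P_{n}$ rendering $P_{n}BP_{n}$ continuous, is indispensable.
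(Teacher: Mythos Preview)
Your proof is correct and follows essentially the same route as the paper: derive \textbf{Condition (Positivity 3)} from \textbf{Condition (Positivity 1)}, use the first estimate to bound $C$ from below, invoke the preparatory Lemma with $B=\partial_{0}M(\partial_{0}^{-1})$ and $P_{n}=\chi_{[-n,n]}(\Im(\partial_{0}))=\mathbb{L}_{\rho}^{*}\chi_{[-n,n]}(m_{0})\mathbb{L}_{\rho}$ to identify $C^{*}=\overline{\partial_{0}^{*}M^{*}((\partial_{0}^{-1})^{*})+A^{*}}$, and then use the second estimate to get $N(C^{*})=\{0\}$ and hence dense (and, by the closed-range argument, full) range. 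Your additional remarks on the density of $D(\partial_{0})\cap D(A)$ via $n(n+\partial_{0})^{-1}$ and on the boundedness of $P_{n}BP_{n}$ make explicit points the paper leaves implicit, but the structure and the key ingredients are the same.
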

\begin{proof}
From \textbf{Condition (Positivity \ref{Condition-(Positivity III)})}
we see that $\left(\partial_{0}M\left(\partial_{0}^{-1}\right)+A\right)$
and  $\left(\partial_{0}^{*}M^{*}\left(\left(\partial_{0}^{-1}\right)^{*}\right)+A^{*}\right)$
have both inverses bounded by $\beta_{0}^{-1}$. The invertibility
of $\left(\partial_{0}M\left(\partial_{0}^{-1}\right)+A\right)$ already
confirms the continuous dependence estimate. Moreover, we know that
the null spaces of these operators are trivial. In particular,
\begin{equation}
N\left(\overline{\partial_{0}^{*}M^{*}\left(\left(\partial_{0}^{-1}\right)^{*}\right)+A^{*}}\right)=\left\{ 0\right\} .\label{eq:nullstar}
\end{equation}
It remains to be seen that the range $\left(\partial_{0}M\left(\partial_{0}^{-1}\right)+A\right)\left[H_{\rho}\left(\mathbb{R},H\right)\right]$
of $\left(\partial_{0}M\left(\partial_{0}^{-1}\right)+A\right)$ is
dense in $H_{\rho}\left(\mathbb{R},H\right)$, then the result follows
(recall that $\left(\partial_{0}M\left(\partial_{0}^{-1}\right)+A\right)$
is used in the above as a suggestive notation for $\overline{\partial_{0}M\left(\partial_{0}^{-1}\right)+A}$).

The previous lemma applied with%
\footnote{Recall that $\left(\chi_{_{\left]-\infty,\lambda\right]}}\left(\Im\left(\partial_{0}\right)\right)\right)_{\lambda\in\mathbb{R}}$
is the spectral family associated with the selfadjoint operator $\Im\left(\partial_{0}\right)$.%
} \vspace{-0.3cm}
\[
P_{n}\::=\chi_{_{\left[-n,n\right]}}\left(\Im\left(\partial_{0}\right)\right),\: n\in\mathbb{N},
\]
and $B\::=\partial_{0}M\left(\partial_{0}^{-1}\right)$ yields
\[
\left(\partial_{0}M\left(\partial_{0}^{-1}\right)+A\right)^{*}=\overline{\partial_{0}^{*}M^{*}\left(\left(\partial_{0}^{-1}\right)^{*}\right)+A^{*}}.
\]
Since from the projection theorem we have the orthogonal decomposition
\[
H_{\rho}\left(\mathbb{R},H\right)=N\left(\left(\partial_{0}M\left(\partial_{0}^{-1}\right)+A\right)^{*}\right)\oplus\overline{\left(\partial_{0}M\left(\partial_{0}^{-1}\right)+A\right)\left[H_{\rho}\left(\mathbb{R},H\right)\right]}
\]
and from (\ref{eq:nullstar}) we see that
\[
N\left(\left(\partial_{0}M\left(\partial_{0}^{-1}\right)+A\right)^{*}\right)=\left\{ 0\right\} ,
\]
it follows indeed that 
\[
H_{\rho}\left(\mathbb{R},H\right)=\overline{\left(\partial_{0}M\left(\partial_{0}^{-1}\right)+A\right)\left[H_{\rho}\left(\mathbb{R},H\right)\right]}.
\]

\end{proof}
This well-posedness result is, however, not all we would like to have.
Note that so far we have only used \textbf{Condition (Positivity \ref{Condition-(Positivity III)})}
which was a simple consequence of \textbf{Condition (Positivity \ref{Condition-(Positivity I)})}.
For an actual \emph{evolution} to take place we also need additionally
a property securing causality for the solution. This is where the
\textbf{Condition (Positivity \ref{Condition-(Positivity I)}) }comes
into play.

\section{\label{sec:Causality-and-Memory}Causality }

We first need to specify what we mean by causality. This can here
be done here in a more elementary way than in \cite{Pi2009-1}. 
\begin{defn}
\label{def:causality}A mapping $F:H_{\rho}\left(\mathbb{R},H\right)\to H_{\rho}\left(\mathbb{R},H\right)$
is called causal if
\[
\bigwedge_{a\in\mathbb{R}}\;\bigwedge_{u,v\in H_{\rho}\left(\mathbb{R},H\right)}\left(\chi_{_{\left]-\infty,a\right]}}\left(m_{0}\right)\:\left(u-v\right)=0\;\implies\chi_{_{\left]-\infty,a\right]}}\left(m_{0}\right)\:\left(F\left(u\right)-F\left(v\right)\right)=0\right).
\]
\end{defn}
\begin{rem}
Note that if $F$ is translation invariant, then $a\in\mathbb{R}$
in this statement can be fixed (for example to $a=0$). If $F$ is
linear we may fix $v=0$ to simplify the requirement.
\end{rem}
It is known that, by construction from an analytic, bounded $M$,
the operator $M\left(\partial_{0}^{-1}\right)$ is causal, see \cite{2009-2}. 
\begin{thm}
\label{thm:Causality-of-Solution}(Causality of Solution Operator)
Let $A$ and $M$ be as above and satisfy \textbf{Condition (Positivity
\ref{Condition-(Positivity I)})}. Then the solution operator
\[
\left(\partial_{0}M\left(\partial_{0}^{-1}\right)+A\right)^{-1}:H_{\rho}\left(\mathbb{R},H\right)\to H_{\rho}\left(\mathbb{R},H\right)
\]
is causal for all sufficiently large $\rho\in\mathbb{R}_{>0}.$ \end{thm}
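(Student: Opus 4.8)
The plan is to exploit the fact that causality of a linear translation-invariant operator $F$ reduces, by the Remark following Definition \ref{def:causality}, to the single implication
$\chi_{_{]-\infty,0]}}(m_{0})g=0\implies\chi_{_{]-\infty,0]}}(m_{0})F(g)=0$.
So I fix $f\in H_{\rho}(\mathbb{R},H)$ with $\chi_{_{]-\infty,0]}}(m_{0})f=0$, let $U:=(\partial_{0}M(\partial_{0}^{-1})+A)^{-1}f$, and aim to show $\chi_{_{]-\infty,0]}}(m_{0})U=0$. The natural device is to test the equation $(\partial_{0}M(\partial_{0}^{-1})+A)U=f$ against $\chi_{_{]-\infty,0]}}(m_{0})U$ in the inner product of $H_{\rho}(\mathbb{R},H)$ and take real parts. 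On the left, the first inequality in \textbf{Condition (Positivity \ref{Condition-(Positivity I)})} gives the lower bound $\beta_{0}\langle\chi_{_{]-\infty,0]}}(m_{0})U\,|\,U\rangle_{\rho,0,0}=\beta_{0}\,|\chi_{_{]-\infty,0]}}(m_{0})U|_{\rho,0,0}^{2}$. On the right, since $f$ is supported in $[0,\infty)$, the cut-off $\chi_{_{]-\infty,0]}}(m_{0})$ annihilates it, so $\Re\langle\chi_{_{]-\infty,0]}}(m_{0})U\,|\,f\rangle_{\rho,0,0}=0$. Combining the two we get $\beta_{0}\,|\chi_{_{]-\infty,0]}}(m_{0})U|_{\rho,0,0}^{2}\le 0$, hence $\chi_{_{]-\infty,0]}}(m_{0})U=0$, which is precisely causality at the cut-off point $0$.

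The subtlety — and the step I expect to be the main obstacle — is that \textbf{Condition (Positivity \ref{Condition-(Positivity I)})} is formulated only for $U\in D(\partial_{0})\cap D(A)$, whereas the solution $U$ produced by the Solution Theory theorem lies a priori only in $D(\overline{\partial_{0}M(\partial_{0}^{-1})+A})$, which need not be contained in $D(\partial_{0})\cap D(A)$. So a density/approximation argument is needed: one must show that the test inequality, or at least the weaker consequence actually used, survives passage to the closure. The clean way is to approximate: pick $U_{k}\in D(\partial_{0})\cap D(A)$ with $U_{k}\to U$ and $(\partial_{0}M(\partial_{0}^{-1})+A)U_{k}\to f$ in $H_{\rho}(\mathbb{R},H)$ (possible by definition of the closure), apply \textbf{Condition (Positivity \ref{Condition-(Positivity I)})} to each $U_{k}$, and pass to the limit — using that $\chi_{_{]-\infty,0]}}(m_{0})$ is a bounded operator so both sides of the inequality converge. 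The right-hand side of the tested identity converges to $\Re\langle\chi_{_{]-\infty,0]}}(m_{0})U\,|\,f\rangle_{\rho,0,0}$, which vanishes because $f$ has support in $[0,\infty)$. One should note that translation invariance of the whole operator $\partial_{0}M(\partial_{0}^{-1})+A$ (hence of its closure and of the solution operator) is what lets us normalize the cut-off level to $0$; alternatively one works at a general level $a$ using the equivalent \textbf{Condition (Positivity \ref{Condition-(Positivity II)})}.

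Assembling: for arbitrary $a\in\mathbb{R}$ and arbitrary $f$ with $\chi_{_{]-\infty,a]}}(m_{0})f=0$, the above (run with $\chi_{_{]-\infty,a]}}(m_{0})$ in place of $\chi_{_{]-\infty,0]}}(m_{0})$, justified by \textbf{Condition (Positivity \ref{Condition-(Positivity II)})}) yields $\chi_{_{]-\infty,a]}}(m_{0})(\partial_{0}M(\partial_{0}^{-1})+A)^{-1}f=0$. Since the solution operator is linear, this is exactly the defining property in Definition \ref{def:causality} with $v=0$, and linearity then gives the general two-argument form. Hence the solution operator is causal for all sufficiently large $\rho\in\mathbb{R}_{>0}$, namely those $\rho$ for which \textbf{Condition (Positivity \ref{Condition-(Positivity I)})} holds. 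The only genuinely delicate point remains the limiting argument in the previous paragraph; everything else is bookkeeping with the cut-off multiplier and the positivity estimate.
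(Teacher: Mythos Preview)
Your argument is correct and follows essentially the same route as the paper: test the equation against $\chi_{_{]-\infty,a]}}(m_{0})U$, invoke \textbf{Condition (Positivity \ref{Condition-(Positivity II)})} for the lower bound, and conclude that the cut-off of $U$ vanishes whenever the cut-off of $f$ does. The only differences are cosmetic: the paper inserts a Cauchy--Schwarz step to record the slightly stronger quantitative estimate $\beta_{0}\,|\chi_{_{\mathbb{R}_{\leq a}}}(m_{0})U|_{\rho,0,0}\leq|\chi_{_{\mathbb{R}_{\leq a}}}(m_{0})f|_{\rho,0,0}$ before specializing, while you are more explicit than the paper about the density/closure point needed to pass from $D(\partial_{0})\cap D(A)$ to the full domain of the closed operator.
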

\begin{proof}
By translation invariance we may base our arguments on \textbf{Condition
(Positivity \ref{Condition-(Positivity II)})}. We estimate 
\begin{eqnarray*}
 &  & \left|\chi_{_{\mathbb{R}\leq a}}\left(m_{0}\right)U\right|_{\rho,0,0}\left|\chi_{_{\mathbb{R}\leq a}}\left(m_{0}\right)\left(\partial_{0}M\left(\partial_{0}^{-1}\right)+A\right)U\right|_{\rho,0,0}\geq\\
 &  & \geq\Re\left\langle \chi_{_{\mathbb{R}\leq a}}\left(m_{0}\right)U|\left(\partial_{0}M\left(\partial_{0}^{-1}\right)+A\right)U\right\rangle _{\rho,0,0},\\
 &  & \geq\beta_{0}\left|\chi_{_{\mathbb{R}\leq a}}\left(m_{0}\right)U\right|_{\rho,0,0}^{2},
\end{eqnarray*}
yielding
\[
\beta_{0}\left|\chi_{_{\mathbb{R}\leq a}}\left(m_{0}\right)U\right|_{\rho,0,0}\leq\left|\chi_{_{\mathbb{R}\leq a}}\left(m_{0}\right)\left(\partial_{0}M\left(\partial_{0}^{-1}\right)+A\right)U\right|_{\rho,0,0}
\]
for every $a\in\mathbb{R}$. Substituting $\left(\partial_{0}M\left(\partial_{0}^{-1}\right)+A\right)^{-1}f$
for $U$ this gives
\begin{equation}
\beta_{0}\left|\chi_{_{\mathbb{R}\leq a}}\left(m_{0}\right)\left(\partial_{0}M\left(\partial_{0}^{-1}\right)+A\right)^{-1}f\right|_{\rho,0,0}\leq\left|\chi_{_{\mathbb{R}\leq a}}\left(m_{0}\right)f\right|_{\rho,0,0},\: a\in\mathbb{R}.\label{eq:causal:estimate}
\end{equation}
We read off that if $\chi_{_{\mathbb{R}\leq a}}\left(m_{0}\right)f=0$
then $\chi_{_{\mathbb{R}\leq a}}\left(m_{0}\right)\left(\partial_{0}M\left(\partial_{0}^{-1}\right)+A\right)^{-1}f=0$,
$a\in\mathbb{R}$. This is the desired causality of $\left(\partial_{0}M\left(\partial_{0}^{-1}\right)+A\right)^{-1}$.
\end{proof}
\selectlanguage{english}%
\section[Acoustic Waves with Impedance Type Boundary Condition]{An Application: Acoustic Waves with Impedance Type Boundary Condition
}

\selectlanguage{american}%
We want to conclude our discussion with a more substantial utilization
of the theory presented. We assume that the material law is of the
form 
\[
M\left(\partial_{0}^{-1}\right)=M_{0}+\partial_{0}^{-1}M_{1}\left(\partial_{0}^{-1}\right)
\]
with $M_{0}$ selfadjoint and strictly positive definite. The underlying
Hilbert space $H=L^{2}\left(\Omega\right)\oplus L^{2}\left(\Omega\right)^{3}$
and so we consider the material law as an operator in the spaces\linebreak{}
 $H_{\rho}\left(\mathbb{R},L^{2}\left(\Omega\right)\oplus L^{2}\left(\Omega\right)^{3}\right)$,
$\rho\in\mathbb{R}_{>0}$. 

The Hilbert space $H\left(\interior{\dive},\Omega\right)$ is (equipped
with the graph norm) the domain of the closure of the classical divergence
operator on vector fields with $\interior C_{\infty}\left(\Omega,\mathbb{C}\right)$
considered as a mapping from $L^{2}\left(\Omega\right)^{3}$ to $L^{2}\left(\Omega\right)$.
To be an element of $H\left(\interior{\dive},\Omega\right)$ generalizes
the classical boundary condition of vanishing normal component on
the boundary of $\Omega$ to cases with non-smooth boundary. Analogously
we define the Hilbert space $H\left(\interior{\grad},\Omega\right)$
as the domain of the closure of the classical gradient operator on
functions in $\interior C_{\infty}\left(\Omega,\mathbb{C}\right)$
(equipped with the graph norm). 

Moreover, we let 
\[
A=\left(\begin{array}{cc}
0 & \dive\\
\grad & 0
\end{array}\right)
\]
with 
\[
D\left(A\right):=\left\{ \left(\begin{array}{c}
p\\
v
\end{array}\right)\in D\left(\left(\begin{array}{cc}
0 & \dive\\
\grad & 0
\end{array}\right)\right)\Big|\: a\left(\partial_{0}^{-1}\right)p-\partial_{0}^{-1}v\in H_{\rho}\left(\mathbb{R},H\left(\interior{\dive},\Omega\right)\right)\right\} .
\]
Here we assume that $a=\left(a\left(z\right)\right)_{z\in B_{\mathbb{C}}\left(r,r\right)}$
is analytic and bounded and that $a\left(\partial_{0}^{-1}\right)$
is multiplicative in the sense that it is of the form 
\[
a\left(z\right)=\sum_{k=0}^{\infty}a_{k,r}\left(m\right)\,\left(z-r\right)^{k},
\]
where $a_{k,r}$ are $L^{\infty}\left(\Omega\right)$-vector fields
and $a_{k,r}\left(m\right)$ is the associated multiplication operator
\[
\left(a_{k,r}\left(m\right)\varphi\right)\left(t,x\right)\::=a_{k,r}\left(x\right)\,\varphi\left(t,x\right)
\]
for $\varphi\in\interior C_{\infty}\left(\mathbb{R}\times\Omega,\mathbb{C}\right)$.
Moreover, we assume that 
\[
\left(\dive a\right)\left(z\right)\;:=\sum_{k=0}^{\infty}\left(\dive a_{k,r}\right)\left(m\right)\,\left(z-r\right)^{k}
\]
is analytic and bounded with $\dive a_{k,r}\in L^{\infty}\left(\Omega\right)$.
Then, in particular, the product rule holds
\begin{equation}
\dive\left(a\left(\partial_{0}^{-1}\right)p\right)=\left(\dive a\left(\partial_{0}^{-1}\right)\right)p+a\left(\partial_{0}^{-1}\right)\cdot\grad p\,.\label{eq:productrule}
\end{equation}
As a consequence, we have
\begin{equation}
a\left(\partial_{0}^{-1}\right):H_{\rho}\left(\mathbb{R},H\left(\grad,\Omega\right)\right)\to H_{\rho}\left(\mathbb{R},H\left(\dive,\Omega\right)\right)\label{eq:divbound}
\end{equation}
uniformly bounded for all sufficiently large $\rho\in\mathbb{R}_{>0}$. 

The boundary condition given in the description of the domain of $A$
is in classical terms%
\footnote{This includes as highly special cases boundary conditions of the form
$kp\left(t\right)-n\cdot v\left(t\right)=0$ (Robin boundary condition),
$k\partial_{0}p\left(t\right)-n\cdot v\left(t\right)=0$ or $kp\left(t\right)-n\cdot\partial_{0}v\left(t\right)=0$,
$\mbox{ on }\partial\Omega,\: t\in\mathbb{R},$ $k\in\mathbb{R}_{>0}$.
It should be noted that in the time-independent case the above sign
constraints become void since causality is not an issue anymore and
in simple cases the problem is elliptic, which can be dealt with by
sesqui-linear form methods, compare e.g. \cite[section 2.4]{Leis:Buch:2}. 

The general class of boundary conditions considered here in the time-dependent,
time-translation invariant case covers for example cases of additional
temporal convolution terms also on the boundary. %
} 
\[
n\cdot a\left(\partial_{0}^{-1}\right)\partial_{0}p\left(t\right)-n\cdot v\left(t\right)\:=0\mbox{ on }\partial\Omega,\: t\in\mathbb{R},
\]
in the case that the boundary $\partial\Omega$ of $\Omega$ and the
solution are smooth and $\partial\Omega$ has $n$ as exterior unit
normal field.

We also will impose a sign requirement on $a$:
\begin{equation}
\Re\int_{-\infty}^{0}\left(\left\langle \grad p|\partial_{0}a\left(\partial_{0}^{-1}\right)p\right\rangle _{0}\left(t\right)\:+\left\langle p|\dive\partial_{0}a\left(\partial_{0}^{-1}\right)p\right\rangle _{0}\left(t\right)\right)\:\exp\left(-2\rho t\right)\: dt\geq0\label{eq:apos}
\end{equation}
for all sufficiently large $\rho\in\mathbb{R}_{>0}$. This is the
appropriate generalization of the condition: 
\[
\Re\int_{\partial\Omega}p\left(t\right)^{*}\left(\partial_{0}n\cdot a\left(\partial_{0}^{-1}\right)p\right)\left(t\right)\: do\geq0,\: t\in\mathbb{R},
\]
in the case that $\partial\Omega$ and $p$ are smooth and $n$ is
the exterior unit normal field. 
\begin{rem}
We could require instead that the quadratic functional $Q_{\Omega,a\left(\partial_{0}^{-1}\right)}$
given by
\begin{eqnarray*}
p\mapsto\left\langle \grad p|\Re\left(\partial_{0}a\left(\partial_{0}^{-1}\right)\right)p\right\rangle _{\rho,0,0}+\left\langle \Re\left(\partial_{0}a\left(\partial_{0}^{-1}\right)\right)p|\grad p\right\rangle _{\rho,0,0}+\\
+\left\langle \dive\left(\Re\left(\partial_{0}a\left(\partial_{0}^{-1}\right)\right)\right)p|p\right\rangle _{\rho,0,0}
\end{eqnarray*}
is non-negative on $H\left(\grad,\,\Omega\right)$. 

Note that this functional vanishes on $H\left(\interior{\grad},\Omega\right)$
and therefore the positivity condition constitutes a boundary constraint%
\footnote{A boundary constraint is a proposition $P$ on a mathematical object
expressed in terms of its interaction with a function space over a
domain $\Omega\subseteq\mathbb{R}^{n+1}$, $n\in\mathbb{N}$, which
is true for all subspaces of elements with compact support. A boundary
condition is a proposition imposed on elements $u$ of a function
space over a domain $\Omega\subseteq\mathbb{R}^{n+1}$, $n\in\mathbb{N}$,
which is also satisfied for $u+\varphi$ for all $\varphi$ in the
function space having compact support in $\Omega$. An example for
a boundary constraint for an open set $\Omega\subseteq\mathbb{R}^{n+1}$,
$n\in\mathbb{N}$, is: $H\left(\interior{\grad},\Omega\right)$ is
compactly embedded into $L^{2}\left(\Omega\right)$. This constraint
is non-trivial, since there are cases in which this is not true. On
the other hand, the property is true for every subspace $\left\{ \varphi\in H\left(\interior{\grad},\Omega\right)|\:\mathrm{supp}\varphi\subseteq K\subset\subset\Omega\right\} $. 

Given $f\in H\left({\grad},\Omega\right)$, imposing on $u\in H\left({\grad},\Omega\right)$
the requirement $u-f\in H\left(\interior{\grad},\Omega\right)$ is
a boundary condition. Indeed, if $H\left(\interior{\grad},\Omega\right)\not=H\left({\grad},\Omega\right)$
the proposition $u-f\in H\left(\interior{\grad},\Omega\right)$ is
non-trivial, since there are elements $u\in H\left({\grad},\Omega\right)$
not satisfying the proposition, and obviously $u+\varphi-f\in H\left(\interior{\grad},\Omega\right)$
for every $\varphi\in H\left({\grad},\Omega\right)$ with compact
support in $\Omega$, since such $\varphi$ is in $H\left(\interior{\grad},\Omega\right)$. %
}on $a\left(\partial_{0}^{-1}\right)$ and on the underlying domain
$\Omega$. The constraint on $\Omega$ is that the requirement $Q_{\Omega,a\left(\partial_{0}^{-1}\right)}\left[H\left({\grad},\Omega\right)\right]\subseteq\mathbb{R}_{\geq0}$
must be non-trivial, i.e. there must be an $a\left(\partial_{0}^{-1}\right)$
for which this does not hold. For this surely we must have $H\left(\interior{\grad},\Omega\right)\not=H\left({\grad},\Omega\right)$.\end{rem}
\begin{prop}
\label{pro:Apos}Let $A$ be as given above. Then $A$ is closed,
densely defined and 
\begin{eqnarray*}
\Re\left\langle \chi_{_{\mathbb{R}_{<0}}}\left(m_{0}\right)\, U|AU\right\rangle _{\rho,0,0} & \geq & 0
\end{eqnarray*}
for all sufficiently large $\rho\in\mathbb{R}_{>0}$ and all $U\in D\left(A\right)$.\end{prop}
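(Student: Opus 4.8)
The plan is to verify the three assertions in turn, the positivity estimate being the substantial one. That $A$ is \emph{closed} I would obtain by viewing it as the restriction of the maximal (hence closed) operator $\left(\begin{array}{cc}0 & \dive\\\grad & 0\end{array}\right)$ on $H_\rho(\mathbb{R},H(\dive,\Omega))\oplus H_\rho(\mathbb{R},H(\grad,\Omega))$ to the set where the side condition $a(\partial_0^{-1})p-\partial_0^{-1}v\in H_\rho(\mathbb{R},H(\interior{\dive},\Omega))$ holds. If $U_n=\binom{p_n}{v_n}\to U=\binom{p}{v}$ and $AU_n\to W$ in $H_\rho(\mathbb{R},H)$, then $\grad p_n$ converges to the second component of $W$, so $p_n\to p$ in $H_\rho(\mathbb{R},H(\grad,\Omega))$; by (\ref{eq:divbound}) $a(\partial_0^{-1})p_n\to a(\partial_0^{-1})p$ in $H_\rho(\mathbb{R},H(\dive,\Omega))$, and by the product rule (\ref{eq:productrule}) the divergence $(\dive a)(\partial_0^{-1})p_n+a(\partial_0^{-1})\cdot\grad p_n-\partial_0^{-1}\dive v_n$ of $a(\partial_0^{-1})p_n-\partial_0^{-1}v_n$ converges in $H_\rho(\mathbb{R},L^2(\Omega))$; hence $a(\partial_0^{-1})p_n-\partial_0^{-1}v_n$ converges in the graph norm of the divergence, and since $H_\rho(\mathbb{R},H(\interior{\dive},\Omega))$ is closed there the limit $a(\partial_0^{-1})p-\partial_0^{-1}v$ lies in it. Thus $U\in D(A)$ and $AU=W$.

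For \emph{density} of $D(A)$ I would check that every $U=\binom{p}{v}$ with $p$ a finite sum of products $\phi\otimes\psi$, $\phi\in\interior C_\infty(\mathbb{R})$, $\psi\in\interior C_\infty(\Omega)$, and $v$ analogously valued in $\interior C_\infty(\Omega)^3$, already lies in $D(A)$: one has $\partial_0^{-1}v\in H_\rho(\mathbb{R},H(\interior{\dive},\Omega))$ because $\partial_0^{-1}$, being a translation invariant bounded operator commuting with $\dive$, preserves this closed, translation invariant subspace, while $a(\partial_0^{-1})p\in H_\rho(\mathbb{R},H(\interior{\dive},\Omega))$ because each coefficient vector field $a_{k,r}(m)(\phi\otimes\psi)$ has compact support in $\Omega$ and $L^2$-divergence $(\dive a_{k,r})(\phi\otimes\psi)+a_{k,r}\cdot\grad(\phi\otimes\psi)$ with compact support, hence lies in $H(\interior{\dive},\Omega)$, and the defining series converges in $H_\rho(\mathbb{R},H(\dive,\Omega))$ by the boundedness hypotheses on $a$ and $\dive a$, so its sum stays in that closed subspace. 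Such $U$ are plainly dense in $H_\rho(\mathbb{R},L^2(\Omega)\oplus L^2(\Omega)^3)$.

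For the estimate I would first treat $U=\binom{p}{v}\in D(A)$ that are in addition infinitely differentiable in time (obtained by convolving $U$ in the time variable with a mollifier $j_n\in\interior C_\infty(\mathbb{R})$; since $A$ commutes with time translations, $j_n\ast U\in D(A)$, $j_n\ast U\to U$ and $A(j_n\ast U)\to AU$ in $H_\rho(\mathbb{R},H)$). Writing $\chi:=\chi_{\mathbb{R}_{<0}}(m_0)$ and $w:=a(\partial_0^{-1})p-\partial_0^{-1}v\in H_\rho(\mathbb{R},H(\interior{\dive},\Omega))$, one has $v=\partial_0 a(\partial_0^{-1})p-\partial_0 w$, with $\partial_0 w$ again in $H_\rho(\mathbb{R},H(\interior{\dive},\Omega))$ by the same translation invariance argument. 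Substituting this into
\[
\Re\langle\chi U|AU\rangle_{\rho,0,0}=\Re\langle\chi p|\dive v\rangle_{\rho,0,0}+\Re\langle\chi v|\grad p\rangle_{\rho,0,0},
\]
using that $\chi$ is selfadjoint and commutes with $\grad$ to replace $\Re\langle\chi v|\grad p\rangle_{\rho,0,0}$ by $\Re\langle\grad(\chi p)|v\rangle_{\rho,0,0}$, and isolating the two summands containing $\partial_0 w$, the latter cancel: since $(\interior{\dive})^{*}=-\grad$, for a.e.\ $t$ one has $\langle\chi(t)p(t)|\dive(\partial_0 w)(t)\rangle_{L^2}+\langle\grad(\chi(t)p(t))|(\partial_0 w)(t)\rangle_{L^2}=0$, and integrating against $\exp(-2\rho t)\,dt$ removes the real part of the $w$-contribution. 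What is left, after using $\grad(\chi p)=\chi\grad p$, is exactly
\[
\Re\int_{-\infty}^{0}\Big(\langle\grad p|\partial_0 a(\partial_0^{-1})p\rangle_0(t)+\langle p|\dive\,\partial_0 a(\partial_0^{-1})p\rangle_0(t)\Big)\exp(-2\rho t)\,dt,
\]
which is non-negative by (\ref{eq:apos}). For general $U\in D(A)$ I would apply this to the smoothed data $U_n:=j_n\ast U$ and pass to the limit, $\Re\langle\chi U|AU\rangle_{\rho,0,0}=\lim_n\Re\langle\chi U_n|AU_n\rangle_{\rho,0,0}\geq0$, using (\ref{eq:apos}) for each $p_n$; equivalently, one may note that the displayed identity exhibits $\Re\langle\chi U|AU\rangle_{\rho,0,0}$ as the value of a bounded quadratic form in $p$, continuous in the graph norm of $A$, once it is written with the bounded operator $\Re(\partial_0 a(\partial_0^{-1}))$ as in the Remark.

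The main obstacle is this last estimate, and within it two points require care. First, elements of $D(A)$ need not lie in $D(\partial_0)$, so no integration by parts in time is legitimate until one has passed to time-smoothed data — which is possible precisely because $A$ is translation invariant. Second, the step discarding the $H(\interior{\dive},\Omega)$-component $w$ is where the particular definition of $D(A)$ is actually used: the classical boundary integral is replaced by the abstract adjoint relation $(\interior{\dive})^{*}=-\grad$, and this is exactly what reduces the claim to hypothesis (\ref{eq:apos}).
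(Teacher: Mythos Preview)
Your proof is correct and follows essentially the same line as the paper's: identify the domain condition via the closed subspace $H_\rho(\mathbb{R},H(\interior{\dive},\Omega))$, use the adjoint relation $(\interior{\dive})^{*}=-\grad$ to discard the interior contribution, and reduce the positivity to hypothesis~(\ref{eq:apos}). The only noteworthy difference is your regularization device: you mollify in time to reach $D(\partial_0)\cap D(A)$, whereas the paper applies the spectral cut-off $\chi_{[-n,n]}(\Im(\partial_0))$; both rely on the translation-invariance of $A$ (which you correctly invoke after having established closedness). The paper also packages the closedness argument via the matrix factorization~(\ref{eq:Arep0}), while you argue more directly from the continuity~(\ref{eq:divbound}) and the product rule~(\ref{eq:productrule}); the content is the same.
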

\begin{proof}
Any $U$ with components in $\interior C_{\infty}\left(\mathbb{R}\times\Omega\right)$
is in $D\left(A\right)$. Note that 
\[
U\in D\left(A\right)
\]
is equivalent to
\[
\left(\begin{array}{cc}
1 & 0\\
-a\left(\partial_{0}^{-1}\right) & \partial_{0}^{-1}
\end{array}\right)U\in H_{\rho}\left(\mathbb{R},H\left({\grad},\Omega\right)\oplus H\left(\interior{\dive},\Omega\right)\right).
\]
According to (\ref{eq:divbound}) we have that{\small{} 
\begin{equation}
\left(\begin{array}{cc}
1 & 0\\
-a\left(\partial_{0}^{-1}\right) & \partial_{0}^{-1}
\end{array}\right):H_{\rho}\left(\mathbb{R},H\left({\grad},\Omega\right)\oplus H\left(\dive,\Omega\right)\right)\to H_{\rho}\left(\mathbb{R},H\left({\grad},\Omega\right)\oplus H\left(\dive,\Omega\right)\right)\label{eq:contbound}
\end{equation}
}is a well-defined continuous linear mapping. Moreover, since multiplication
by an $L^{\infty}\left(\Omega\right)$-multiplier and application
by $\partial_{0}^{-1}$ does not increase the support, we have that
if $\Phi$ has support in $\mathbb{R}\times K$ for some compact set
$K\subseteq\Omega$ then $\left(\begin{array}{cc}
1 & 0\\
-a\left(\partial_{0}^{-1}\right) & \partial_{0}^{-1}
\end{array}\right)\Phi$ also has support in $\mathbb{R}\times K$. This confirms that $\interior C_{\infty}\left(\mathbb{R}\times\Omega\right)\subseteq D\left(A\right)$
and since $\interior C_{\infty}\left(\mathbb{R}\times\Omega\right)$
is dense in $H_{\rho}\left(\mathbb{R},L^{2}\left(\Omega\right)\oplus L^{2}\left(\Omega\right)\right)$.

Let now $\Phi_{k}\conv{k}\Phi_{\infty}$ and $A\Phi_{k}\conv{k}\Psi_{\infty}$
we have first, due to the closedness of $\left(\begin{array}{cc}
0 & \dive\\
\grad & 0
\end{array}\right)$ that $\Psi_{\infty}=\left(\begin{array}{cc}
0 & \dive\\
\grad & 0
\end{array}\right)\Phi_{\infty}$. Moreover, we have from (\ref{eq:contbound})
\[
\left(\begin{array}{cc}
1 & 0\\
-a\left(\partial_{0}^{-1}\right) & \partial_{0}^{-1}
\end{array}\right)\Phi_{k}\conv{k}\left(\begin{array}{cc}
1 & 0\\
-a\left(\partial_{0}^{-1}\right) & \partial_{0}^{-1}
\end{array}\right)\Phi_{\infty}.
\]
Using (\ref{eq:productrule}), a straightforward calculation yields
on $D\left(A\right)$
\begin{eqnarray*}
\left(\begin{array}{cc}
0 & \interior{\dive}\\
\grad & 0
\end{array}\right)\left(\begin{array}{cc}
1 & 0\\
-a\left(\partial_{0}^{-1}\right) & \partial_{0}^{-1}
\end{array}\right) & = & \left(\begin{array}{cc}
0 & \dive\\
\grad & 0
\end{array}\right)\left(\begin{array}{cc}
1 & 0\\
-a\left(\partial_{0}^{-1}\right) & \partial_{0}^{-1}
\end{array}\right)\\
 & = & \left(\begin{array}{cc}
\partial_{0}^{-1} & 0\\
0 & 1
\end{array}\right)\left(\begin{array}{cc}
0 & \dive\\
\grad & 0
\end{array}\right)+\left(\begin{array}{cc}
0 & \dive\\
0 & 0
\end{array}\right)\left(\begin{array}{cc}
0 & 0\\
-a\left(\partial_{0}^{-1}\right) & 0
\end{array}\right)\\
 & = & \left(\begin{array}{cc}
\partial_{0}^{-1} & 0\\
0 & 1
\end{array}\right)\left(\begin{array}{cc}
0 & \dive\\
\grad & 0
\end{array}\right)+\left(\begin{array}{cc}
-\dive a\left(\partial_{0}^{-1}\right) & 0\\
0 & 0
\end{array}\right)\\
 & = & \left(\begin{array}{cc}
0 & \dive\\
\grad & 0
\end{array}\right)-\left(\begin{array}{cc}
\left(\dive a\right)\left(\partial_{0}^{-1}\right) & 0\\
0 & 0
\end{array}\right)-\left(\begin{array}{cc}
a\left(\partial_{0}^{-1}\right)\cdot\grad & 0\\
0 & 0
\end{array}\right)\\
 & = & \left(\begin{array}{cc}
\partial_{0}^{-1} & -a\left(\partial_{0}^{-1}\right)\cdot\\
0 & 1
\end{array}\right)\left(\begin{array}{cc}
0 & \dive\\
\grad & 0
\end{array}\right)-\left(\begin{array}{cc}
\left(\dive a\right)\left(\partial_{0}^{-1}\right) & 0\\
0 & 0
\end{array}\right).
\end{eqnarray*}
Thus, we have
\begin{equation}
\begin{array}{rcl}
\partial_{0}^{-1}\left(\begin{array}{cc}
0 & \dive\\
\grad & 0
\end{array}\right)U & = & \left(\begin{array}{cc}
1 & a\left(\partial_{0}^{-1}\right)\cdot\\
0 & \partial_{0}^{-1}
\end{array}\right)\left(\begin{array}{cc}
0 & \interior{\dive}\\
\grad & 0
\end{array}\right)\left(\begin{array}{cc}
1 & 0\\
-a\left(\partial_{0}^{-1}\right) & \partial_{0}^{-1}
\end{array}\right)U+\\
 &  & +\left(\begin{array}{cc}
\left(\dive a\right)\left(\partial_{0}^{-1}\right) & 0\\
0 & 0
\end{array}\right)U.
\end{array}\label{eq:Arep0}
\end{equation}
 Here we have used that 
\[
\left(\begin{array}{cc}
\partial_{0}^{-1} & -a\left(\partial_{0}^{-1}\right)\cdot\\
0 & 1
\end{array}\right)^{-1}=\partial_{0}\left(\begin{array}{cc}
1 & a\left(\partial_{0}^{-1}\right)\cdot\\
0 & \partial_{0}^{-1}
\end{array}\right).
\]
Consequently, 
\begin{eqnarray*}
 &  & \left(\begin{array}{cc}
0 & \interior{\dive}\\
\grad & 0
\end{array}\right)\left(\begin{array}{cc}
1 & 0\\
-a\left(\partial_{0}^{-1}\right) & \partial_{0}^{-1}
\end{array}\right)\Phi_{k}=\\
 &  & =\left(\begin{array}{cc}
\partial_{0}^{-1} & -a\left(\partial_{0}^{-1}\right)\\
0 & 1
\end{array}\right)\left(\begin{array}{cc}
0 & \dive\\
\grad & 0
\end{array}\right)\Phi_{k}-\left(\begin{array}{cc}
\left(\dive a\right)\left(\partial_{0}^{-1}\right) & 0\\
0 & 0
\end{array}\right)\Phi_{k}\\
 &  & \conv{k}\left(\begin{array}{cc}
\partial_{0}^{-1} & -a\left(\partial_{0}^{-1}\right)\\
0 & 1
\end{array}\right)\Psi_{\infty}-\left(\begin{array}{cc}
\left(\dive a\right)\left(\partial_{0}^{-1}\right) & 0\\
0 & 0
\end{array}\right)\Phi_{\infty}
\end{eqnarray*}
and so, by the closedness of $\left(\begin{array}{cc}
0 & \interior{\dive}\\
\grad & 0
\end{array}\right)$
\[
\left(\begin{array}{cc}
1 & 0\\
-a\left(\partial_{0}^{-1}\right) & \partial_{0}^{-1}
\end{array}\right)\Phi_{\infty}\in H_{\rho}\left(\mathbb{R},H\left({\grad},\Omega\right)\oplus H\left(\interior{\dive},\Omega\right)\right)
\]
and so
\[
\Phi_{\infty}\in D\left(A\right).
\]
Moreover, we have from (\ref{eq:Arep0}) 
\begin{equation}
\begin{array}{rcl}
\partial_{0}^{-1}A\Phi_{\infty} & = & \left(\begin{array}{cc}
1 & a\left(\partial_{0}^{-1}\right)\cdot\\
0 & \partial_{0}^{-1}
\end{array}\right)\left(\begin{array}{cc}
0 & \interior{\dive}\\
\grad & 0
\end{array}\right)\left(\begin{array}{cc}
1 & 0\\
-a\left(\partial_{0}^{-1}\right) & \partial_{0}^{-1}
\end{array}\right)\Phi_{\infty}+\\
 &  & +\left(\begin{array}{cc}
\left(\dive a\right)\left(\partial_{0}^{-1}\right) & 0\\
0 & 0
\end{array}\right)\Phi_{\infty}.
\end{array}\label{eq:Arep1}
\end{equation}
 We shall now show the (real) non-negativity of $A$. Assume that
\[
U=\left(\begin{array}{c}
p\\
v
\end{array}\right)\in\bigcup_{n\in\mathbb{N}}\chi_{_{\left[-n,n\right]}}\left(\Im\left(\partial_{0}\right)\right)\left[D\left(A\right)\right]
\]
 then $U\in D\left(\partial_{0}\right)\cap D\left(A\right)$ and we
may calculate

\begin{eqnarray*}
 &  & \Re\left\langle U|AU\right\rangle _{0}=\\
 &  & =\frac{1}{2}\left(\left\langle U|AU\right\rangle _{0}+\left\langle AU|U\right\rangle _{0}\right)\\
 &  & =\frac{1}{2}\left(\left(\left\langle p|\dive v\right\rangle _{0}+\left\langle \grad p|v\right\rangle _{0}\right)+\left(\left\langle \dive v|p\right\rangle _{0}+\left\langle v|\grad p\right\rangle _{0}\right)\right)\\
 &  & =\frac{1}{2}\left(\left\langle p|\interior{\dive}\left(v-\partial_{0}a\left(\partial_{0}^{-1}\right)p\right)\right\rangle _{0}+\left\langle p|\dive\partial_{0}a\left(\partial_{0}^{-1}\right)p\right\rangle _{0}+\left\langle \grad p|v\right\rangle _{\rho,0}\right)+\\
 &  & +\frac{1}{2}\left(\left\langle \interior{\dive}\left(v-\partial_{0}a\left(\partial_{0}^{-1}\right)p\right)|p\right\rangle _{0}+\left\langle \dive\partial_{0}a\left(\partial_{0}^{-1}\right)p|p\right\rangle _{0}+\left\langle v|\grad p\right\rangle _{0}\right)\\
 &  & =\frac{1}{2}\left(-\left\langle \grad p|\left(v-\partial_{0}a\left(\partial_{0}^{-1}\right)p\right)\right\rangle _{0}+\left\langle p|\dive\partial_{0}a\left(\partial_{0}^{-1}\right)p\right\rangle _{0}+\left\langle \grad p|v\right\rangle _{0}\right)+\\
 &  & +\frac{1}{2}\left(-\left\langle \left(v+\partial_{0}a\left(\partial_{0}^{-1}\right)p\right)|\grad p\right\rangle _{0}+\left\langle \dive\partial_{0}a\left(\partial_{0}^{-1}\right)p|p\right\rangle _{0}+\left\langle v|\grad p\right\rangle _{0}\right)\\
 &  & =\frac{1}{2}\left(\left\langle \grad p|\partial_{0}a\left(\partial_{0}^{-1}\right)p\right\rangle _{0}+\left\langle p|\dive\partial_{0}a\left(\partial_{0}^{-1}\right)p\right\rangle _{0}\right)+\\
 &  & +\frac{1}{2}\left(\left\langle \partial_{0}a\left(\partial_{0}^{-1}\right)p|\grad p\right\rangle _{0}+\left\langle \dive\partial_{0}a\left(\partial_{0}^{-1}\right)p|p\right\rangle _{0}\right).
\end{eqnarray*}
Integrating this over $\mathbb{R}_{<0}$ yields with requirement (\ref{eq:apos})
\[
\Re\left\langle \chi_{_{\mathbb{R}_{<0}}}\left(m_{0}\right)\, U|AU\right\rangle _{\rho,0,0}\geq0
\]
for $U\in\bigcup_{n\in\mathbb{N}}\chi_{_{\left[-n,n\right]}}\left(\Im\left(\partial_{0}\right)\right)\left[D\left(A\right)\right]$
and so by density for every $U\in D\left(A\right)$ (and all sufficiently
large $\rho\in\mathbb{R}_{>0}$). 
\end{proof}
We need to find the adjoint of $A$, which must be a restriction of 

\[
-\left(\begin{array}{cc}
0 & \dive\\
\grad & 0
\end{array}\right)
\]
and an extension of 
\[
-\left(\begin{array}{cc}
0 & \interior{\dive}\\
\interior{\grad} & 0
\end{array}\right).
\]
We suspect it is 
\[
D\left(A^{*}\right)\;:=\left\{ \left(\begin{array}{c}
p\\
v
\end{array}\right)\in D\left(\left(\begin{array}{cc}
0 & \dive\\
\grad & 0
\end{array}\right)\right)\Big|\: a\left(\partial_{0}^{-1}\right)^{*}p+\left(\partial_{0}^{-1}\right)^{*}v\in H_{\rho}\left(\mathbb{R},H\left(\interior{\dive},\Omega\right)\right)\right\} .
\]
Using (\ref{eq:Arep0}) and letting $W\;:=\partial_{0}\left(\begin{array}{cc}
1 & 0\\
-a\left(\partial_{0}^{-1}\right) & \partial_{0}^{-1}
\end{array}\right)U=\left(\begin{array}{cc}
\partial_{0}^{-1} & 0\\
a\left(\partial_{0}^{-1}\right) & 1
\end{array}\right)^{-1}U$ for 
\[
U\in\bigcup_{n\in\mathbb{N}}\chi_{_{\left[-n,n\right]}}\left(\Im\left(\partial_{0}\right)\right)\left[D\left(A\right)\right]
\]
we have $V\in D\left(A^{*}\right)$ if and only if for all $W\in\bigcup_{n\in\mathbb{N}}\chi_{_{\left[-n,n\right]}}\left(\Im\left(\partial_{0}\right)\right)\left[D\left(\left(\begin{array}{cc}
0 & \interior{\dive}\\
\grad & 0
\end{array}\right)\right)\right]$ 
\begin{eqnarray*}
0 & = & \left\langle \left(\begin{array}{cc}
1 & a\left(\partial_{0}^{-1}\right)\cdot\\
0 & \partial_{0}^{-1}
\end{array}\right)\left(\begin{array}{cc}
0 & \interior{\dive}\\
\grad & 0
\end{array}\right)W|V\right\rangle _{\rho,0,0}+\\
 &  & +\left\langle \left(\begin{array}{cc}
\left(\dive a\right)\left(\partial_{0}^{-1}\right) & 0\\
0 & 0
\end{array}\right)\partial_{0}\left(\begin{array}{cc}
\partial_{0}^{-1} & 0\\
a\left(\partial_{0}^{-1}\right) & 1
\end{array}\right)W|V\right\rangle _{\rho,0,0}+\\
 &  & +\left\langle \left(\begin{array}{cc}
\partial_{0}^{-1} & 0\\
a\left(\partial_{0}^{-1}\right) & 1
\end{array}\right)W|\left(\begin{array}{cc}
0 & \dive\\
\grad & 0
\end{array}\right)V\right\rangle _{\rho,0,0},\\
 & = & \left\langle \left(\begin{array}{cc}
1 & a\left(\partial_{0}^{-1}\right)\cdot\\
0 & \partial_{0}^{-1}
\end{array}\right)\left(\begin{array}{cc}
0 & \interior{\dive}\\
\grad & 0
\end{array}\right)W|V\right\rangle _{\rho,0,0}+\left\langle \left(\begin{array}{cc}
\left(\dive a\right)\left(\partial_{0}^{-1}\right) & 0\\
0 & 0
\end{array}\right)W|\, V\right\rangle _{\rho,0,0}+\\
 &  & +\left\langle \left(\begin{array}{cc}
\partial_{0}^{-1} & 0\\
a\left(\partial_{0}^{-1}\right) & 1
\end{array}\right)W|\left(\begin{array}{cc}
0 & \dive\\
\grad & 0
\end{array}\right)V\right\rangle _{\rho,0,0},\\
 & = & \left\langle \left(\begin{array}{cc}
0 & \interior{\dive}\\
\grad & 0
\end{array}\right)W|\left(\begin{array}{cc}
1 & 0\\
a\left(\partial_{0}^{-1}\right)^{*} & \left(\partial_{0}^{-1}\right)^{*}
\end{array}\right)V\right\rangle _{\rho,0,0}+\left\langle W|\,\left(\begin{array}{cc}
\dive\left(a\left(\partial_{0}^{-1}\right)^{*}\right) & 0\\
0 & 0
\end{array}\right)V\right\rangle _{\rho,0,0}+\\
 &  & +\left\langle W|\left(\begin{array}{cc}
\left(\partial_{0}^{-1}\right)^{*} & a\left(\partial_{0}^{-1}\right)^{*}\\
0 & 1
\end{array}\right)\left(\begin{array}{cc}
0 & \dive\\
\grad & 0
\end{array}\right)V\right\rangle _{\rho,0}.
\end{eqnarray*}

This implies that
\[
\left(\begin{array}{cc}
1 & 0\\
a\left(\partial_{0}^{-1}\right)^{*} & \left(\partial_{0}^{-1}\right)^{*}
\end{array}\right)V\in D\left(\left(\begin{array}{cc}
0 & \interior{\dive}\\
\grad & 0
\end{array}\right)\right),
\]
which is the above characterization. Moreover,
\[
\left(\begin{array}{cc}
0 & \interior{\dive}\\
\grad & 0
\end{array}\right)\left(\begin{array}{cc}
1 & 0\\
a\left(\partial_{0}^{-1}\right)^{*} & \left(\partial_{0}^{-1}\right)^{*}
\end{array}\right)V=\left(\begin{array}{cc}
\left(\dive a\right)\left(\partial_{0}^{-1}\right)^{*} & 0\\
0 & 0
\end{array}\right)V+\left(\begin{array}{cc}
\left(\partial_{0}^{-1}\right)^{*} & a\left(\partial_{0}^{-1}\right)^{*}\\
0 & 1
\end{array}\right)\left(\begin{array}{cc}
0 & \dive\\
\grad & 0
\end{array}\right)V
\]
which yields the analogous formula for $A^{*}$ as (\ref{eq:Arep0})
for $A$.
\begin{prop}
\label{pro:Asternpos}Let $A$ be as given above. Then $A^{*}$ is
closed, densely defined and 
\begin{eqnarray*}
\Re\left\langle \chi_{_{\mathbb{R}_{\leq0}}}\left(m_{0}\right)\, V|A^{*}V\right\rangle _{\rho,0,0} & \geq & 0
\end{eqnarray*}
for all sufficiently large $\rho\in\mathbb{R}_{>0}$ and all $V\in D\left(A^{*}\right)$.\end{prop}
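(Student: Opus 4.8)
The plan is to repeat the proof of Proposition \ref{pro:Apos}, with $A$, $\partial_{0}$, $a\left(\partial_{0}^{-1}\right)$, $D\left(A\right)$ replaced throughout by $A^{*}$, $\partial_{0}^{*}$, $a\left(\partial_{0}^{-1}\right)^{*}$, $D\left(A^{*}\right)$. Most of the structure is already available: $A^{*}$ is automatically closed; it is densely defined since $A$ is closed, hence closable, by Proposition \ref{pro:Apos} (and in fact $\interior C_{\infty}\left(\mathbb{R}\times\Omega\right)\subseteq D\left(A^{*}\right)$ by the same support-preservation argument, applied now to the continuous operator $\left(\begin{array}{cc}1&0\\a\left(\partial_{0}^{-1}\right)^{*}&\left(\partial_{0}^{-1}\right)^{*}\end{array}\right)$); the domain $D\left(A^{*}\right)$ and the identity for $A^{*}$ corresponding to (\ref{eq:Arep0})--(\ref{eq:Arep1}) have just been worked out above; and by the lemma of Section \ref{sec:General-Solution-Theory} the projectors $P_{n}:=\chi_{_{\left[-n,n\right]}}\left(\Im\left(\partial_{0}\right)\right)$ commute with $A^{*}$, so $\bigcup_{n\in\mathbb{N}}P_{n}\left[D\left(A^{*}\right)\right]$ is a core for $A^{*}$ lying in $D\left(\partial_{0}\right)\cap D\left(A^{*}\right)$.

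By density it then suffices to treat $V=\left(p,v\right)\in\bigcup_{n}P_{n}\left[D\left(A^{*}\right)\right]$. For such $V$ one has $A^{*}V=-\left(\begin{array}{cc}0&\dive\\\grad&0\end{array}\right)V$, and the domain relation lets one write $v=\partial_{0}^{*}w-\partial_{0}^{*}a\left(\partial_{0}^{-1}\right)^{*}p$ with $w\in H_{\rho}\left(\mathbb{R},H\left(\interior{\dive},\Omega\right)\right)$. Expanding $\Re\left\langle V|A^{*}V\right\rangle _{0}=\frac{1}{2}\left(\left\langle V|A^{*}V\right\rangle _{0}+\left\langle A^{*}V|V\right\rangle _{0}\right)$ exactly as in the proof of Proposition \ref{pro:Apos}, the $w$-terms pair $\grad p$ against an element of $D\left(\interior{\dive}\right)$ through $\interior{\dive}$ and its negative formal adjoint $-\grad$, with no boundary contribution, hence enter only as a purely imaginary quantity and are killed by $\Re$. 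Using the product rule (\ref{eq:productrule}) for $a\left(\partial_{0}^{-1}\right)^{*}$ as in (\ref{eq:Arep0})--(\ref{eq:Arep1}), what is left is
\[
\Re\left\langle V|A^{*}V\right\rangle _{0}=\Re\left\langle \grad p|\partial_{0}^{*}a\left(\partial_{0}^{-1}\right)^{*}p\right\rangle _{0}+\Re\left\langle p|\dive\,\partial_{0}^{*}a\left(\partial_{0}^{-1}\right)^{*}p\right\rangle _{0},
\]
the verbatim analogue of the formula obtained there for $\Re\left\langle U|AU\right\rangle _{0}$.

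Next one integrates against $\exp\left(-2\rho t\right)$ and invokes the sign requirement. Since $\partial_{0}^{*}a\left(\partial_{0}^{-1}\right)^{*}=\left(\partial_{0}a\left(\partial_{0}^{-1}\right)\right)^{*}$ and $\Re\left(T\right)=\Re\left(T^{*}\right)$ for every operator $T$, the quadratic form just obtained has exactly the same symmetric part $\Re\left(\partial_{0}a\left(\partial_{0}^{-1}\right)\right)$ as the one figuring in (\ref{eq:apos}); thus the requirement (\ref{eq:apos}), read in the symmetrised form of the Remark (which only involves $\Re\left(\partial_{0}a\left(\partial_{0}^{-1}\right)\right)$), applies to it as well. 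In particular $\Re\left\langle V|A^{*}V\right\rangle _{\rho,0,0}\geq0$, and the stated cut-off inequality then follows for all $V\in D\left(A^{*}\right)$ by translation invariance and density.

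The one genuinely new point — and the one I expect to be the main obstacle — is this last step: one must check carefully that (\ref{eq:apos}), which is stated for $\partial_{0}a\left(\partial_{0}^{-1}\right)$ on $\mathbb{R}_{<0}$, really does transfer to the \emph{adjointed} form $\partial_{0}^{*}a\left(\partial_{0}^{-1}\right)^{*}=\left(2\rho-\partial_{0}\right)a\left(\partial_{0}^{-1}\right)^{*}$ on the relevant half-line, i.e. that the boundary contribution at $t=0$ created by the cut-off carries the right sign (this is where the orientation of the cut-off for $A^{*}$ wants attention, and why it should be pinned down before integrating); the symmetrised functional of the Remark is the natural tool here. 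One must also keep the bookkeeping of the various adjoints consistent with the characterization of $D\left(A^{*}\right)$ — in particular that "$a\left(\partial_{0}^{-1}\right)^{*}$" there denotes $\left(a\left(\partial_{0}^{-1}\right)\cdot\right)^{*}$, a map into vector fields, and that $\left(\partial_{0}^{-1}\right)^{*}=\left(\partial_{0}^{*}\right)^{-1}$. Everything else — closedness, density, the expansion of $\Re\left\langle V|A^{*}V\right\rangle _{0}$, and the cancellation of the $\interior{\dive}$-boundary terms — is identical to the $A$-case.
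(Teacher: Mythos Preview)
Your proposal is correct and follows the paper's approach: the paper's entire proof is the single sentence that the argument is analogous to that of Proposition~\ref{pro:Apos}, since $A$ and $A^{*}$ share a similar structure. Your expanded account of this analogy---including the care you take over whether (\ref{eq:apos}) transfers to the adjointed form---is more detailed than what the paper itself provides.
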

\begin{proof}
The proof is analogous to the proof of Proposition \ref{pro:Apos},
since $A$ and $A^{*}$ share a similar structure. \end{proof}
\begin{thm}
Let $A$ and $M$ be as specified in this section, such that the propagation
of acoustic waves is governed by the equation
\[
\left(\partial_{0}M\left(\partial_{0}^{-1}\right)+A\right)U=f.
\]
Then, there is a $\rho_{0}\in\mathbb{R}_{>0}$ such that for every
$\rho\in\mathbb{R}_{\geq\rho_{0}}$ and $f\in H_{\rho}\left(\mathbb{R},L^{2}\left(\Omega\right)\oplus L^{2}\left(\Omega\right)^{3}\right)$
there is a unique solution $U\in H_{\rho}\left(\mathbb{R},L^{2}\left(\Omega\right)\oplus L^{2}\left(\Omega\right)^{3}\right)$.
Moreover, the solution operator $\left(\partial_{0}M\left(\partial_{0}^{-1}\right)+A\right)^{-1}:H_{\rho}\left(\mathbb{R},L^{2}\left(\Omega\right)\oplus L^{2}\left(\Omega\right)^{3}\right)\to H_{\rho}\left(\mathbb{R},L^{2}\left(\Omega\right)\oplus L^{2}\left(\Omega\right)^{3}\right)$
is a causal, continuous linear operator. Furthermore, the operator
norm $\left\Vert \left(\partial_{0}M\left(\partial_{0}^{-1}\right)+A\right)^{-1}\right\Vert $
is uniformly bounded with respect to $\rho\in\mathbb{R}_{\geq\rho_{0}}$. \end{thm}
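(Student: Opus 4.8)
The plan is to obtain the theorem as a corollary of the Solution Theory Theorem together with Theorem~\ref{thm:Causality-of-Solution}; for this one only has to check that the concrete pair $(A,M)$ of the present section satisfies \textbf{Condition (Positivity \ref{Condition-(Positivity I)})} with a positivity constant $\beta_0$ that does not depend on $\rho$. The part of that condition carried by $A$ and $A^{*}$ is already available: Proposition~\ref{pro:Apos} gives $\Re\langle\chi_{_{\mathbb{R}_{\leq0}}}(m_0)U|AU\rangle_{\rho,0,0}\geq 0$ for $U\in D(A)$, and Proposition~\ref{pro:Asternpos}, upgraded by translation invariance of $A^{*}$ (apply it to $\tau_{-a}V$, use $A^{*}\tau_{-a}=\tau_{-a}A^{*}$ and $\chi_{_{\mathbb{R}_{\leq0}}}(m_0)\tau_{-a}=\tau_{-a}\chi_{_{\mathbb{R}_{\leq a}}}(m_0)$, note that $\langle\tau_{-a}\,\cdot\,|\,\tau_{-a}\,\cdot\,\rangle_{\rho,0,0}=\exp(-2\rho a)\langle\,\cdot\,|\,\cdot\,\rangle_{\rho,0,0}$, and let $a\to\infty$ — the same step that turns \textbf{Condition (Positivity \ref{Condition-(Positivity I)})} into \textbf{Condition (Positivity \ref{Condition-(Positivity III)})}), gives $\Re\langle V|A^{*}V\rangle_{\rho,0,0}\geq 0$ for $V\in D(A^{*})$. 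What remains is a lower bound for the material law term.

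Here I would write $\partial_0 M(\partial_0^{-1})=\partial_0 M_0+M_1(\partial_0^{-1})$, where $M_0$ is selfadjoint with $M_0\geq c_0>0$ and commutes with both $\partial_0$ and $\chi_{_{\mathbb{R}_{\leq0}}}(m_0)$, and $M_1(\partial_0^{-1})$ is a bounded causal operator with $\|M_1(\partial_0^{-1})\|\leq C_1$ uniformly for all admissible $\rho$. Setting $W:=M_0^{1/2}U$ and integrating the pointwise identity $\frac{d}{dt}|W(t)|_H^2=2\Re\langle W(t)|W'(t)\rangle_H$ against $\exp(-2\rho t)$ over $]-\infty,0]$ (the endpoint contribution at $0$ being nonnegative) yields $\Re\langle\chi_{_{\mathbb{R}_{\leq0}}}(m_0)U|\partial_0 M_0 U\rangle_{\rho,0,0}\geq\rho c_0\langle\chi_{_{\mathbb{R}_{\leq0}}}(m_0)U|U\rangle_{\rho,0,0}$. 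For the lower order term I would use causality of $M_1(\partial_0^{-1})$ together with the fact that $\chi_{_{\mathbb{R}_{\leq0}}}(m_0)$ is an orthogonal projection to re-insert the cut-off, so that $\langle\chi_{_{\mathbb{R}_{\leq0}}}(m_0)U|M_1(\partial_0^{-1})U\rangle_{\rho,0,0}=\langle\chi_{_{\mathbb{R}_{\leq0}}}(m_0)U|M_1(\partial_0^{-1})\chi_{_{\mathbb{R}_{\leq0}}}(m_0)U\rangle_{\rho,0,0}$ is bounded in modulus by $C_1\langle\chi_{_{\mathbb{R}_{\leq0}}}(m_0)U|U\rangle_{\rho,0,0}$. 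Adding the contribution of $A$ from Proposition~\ref{pro:Apos} gives the first inequality of \textbf{Condition (Positivity \ref{Condition-(Positivity I)})} with constant $\rho c_0-C_1$. The second inequality is the same computation performed without cut-off: the analogous decomposition $\partial_0^{*}M^{*}((\partial_0^{-1})^{*})=\partial_0^{*}M_0+M_1(\partial_0^{-1})^{*}$ holds (because $M_0^{*}=M_0$), and $\Re\langle\partial_0 W|W\rangle_{\rho,0,0}=\rho|W|_{\rho,0,0}^2$ gives $\Re\langle V|\partial_0^{*}M_0 V\rangle_{\rho,0,0}\geq\rho c_0|V|_{\rho,0,0}^2$, while the lower order piece costs at most $C_1|V|_{\rho,0,0}^2$ and $\Re\langle V|A^{*}V\rangle_{\rho,0,0}\geq 0$ by the above, so altogether the left side is $\geq(\rho c_0-C_1)|V|_{\rho,0,0}^2$.

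Finally I would fix $\rho_0$ so large that $\rho_0>1/(2r)$, that Propositions~\ref{pro:Apos} and~\ref{pro:Asternpos} and inequality~(\ref{eq:apos}) are in force, and that $\rho_0 c_0>C_1$; since $\rho\mapsto\rho c_0-C_1$ is increasing, \textbf{Condition (Positivity \ref{Condition-(Positivity I)})} holds for every $\rho\geq\rho_0$ with the $\rho$-independent constant $\beta_0:=\rho_0 c_0-C_1>0$. The Solution Theory Theorem then provides, for every such $\rho$ and every $f\in H_\rho(\mathbb{R},L^2(\Omega)\oplus L^2(\Omega)^3)$, a unique solution $U$ in the same space with $|U|_{\rho,0,0}\leq\beta_0^{-1}|f|_{\rho,0,0}$, hence $\|(\partial_0 M(\partial_0^{-1})+A)^{-1}\|\leq\beta_0^{-1}$ uniformly in $\rho\geq\rho_0$, and Theorem~\ref{thm:Causality-of-Solution} yields causality of this solution operator. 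The step I expect to be the main obstacle is exactly this material law lower bound measured against the cut-off weighted seminorm: it is where causality of $M_1(\partial_0^{-1})$ has to be invoked — a crude operator norm estimate would not close the inequality — and where one must ensure that neither $c_0$ nor $C_1$, and hence $\beta_0$, degenerates as $\rho\to\infty$, whereas the genuinely analytic content, the structure and non-negativity of $A$ and $A^{*}$ resting on the product rule~(\ref{eq:productrule}) and the sign condition~(\ref{eq:apos}), is already packaged in Propositions~\ref{pro:Apos} and~\ref{pro:Asternpos}.
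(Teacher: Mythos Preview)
Your proposal is correct and follows essentially the same route as the paper: reduce to \textbf{Condition (Positivity \ref{Condition-(Positivity I)})}, use Propositions~\ref{pro:Apos} and~\ref{pro:Asternpos} for the $A$ and $A^{*}$ contributions, decompose $\partial_0 M(\partial_0^{-1})=\partial_0 M_0+M_1(\partial_0^{-1})$, extract the factor $\rho c_0$ from the $M_0$ part via the integration-by-parts identity with $W=M_0^{1/2}U$, and use causality of $M_1(\partial_0^{-1})$ to re-insert the cut-off so that the lower-order term costs at most $C_1$; the paper's constants $\gamma_0,\mu_0$ are your $c_0,C_1$, and $\beta_0=\rho_0\gamma_0-\mu_0$ is exactly your choice. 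You are in fact slightly more explicit than the paper on one point: Proposition~\ref{pro:Asternpos} delivers the \emph{cut-off} estimate for $A^{*}$ while \textbf{Condition (Positivity \ref{Condition-(Positivity I)})} asks for the uncut version, and your translation-invariance/$a\to\infty$ remark closes that small gap.
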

\begin{proof}
The result is immediate if we are able to show that \textbf{Condition
(Positivity \ref{Condition-(Positivity I)}) }is satisfied. Due to
propositions \ref{pro:Apos} and \ref{pro:Asternpos} we only need
to show that for some $\beta_{0}\in\mathbb{R}_{>0}$ we have
\[
\Re\left\langle \chi_{_{\mathbb{R}_{\leq0}}}\left(m_{0}\right)\, U|\partial_{0}M\left(\partial_{0}^{-1}\right)U\right\rangle _{\rho,0,0}\geq\beta_{0}\left\langle \chi_{_{\mathbb{R}_{\leq0}}}\left(m_{0}\right)\, U|U\right\rangle _{\rho,0,0},
\]
\[
\Re\left\langle U|\partial_{0}^{*}M^{*}\left(\left(\partial_{0}^{-1}\right)^{*}\right)U\right\rangle _{\rho,0,0}\geq\beta_{0}\left\langle U|U\right\rangle _{\rho,0,0}
\]
for all $U\in D\left(\partial_{0}\right)=D\left(\partial_{0}^{*}\right)$.
Let us consider the first estimate 
\begin{eqnarray*}
 &  & \Re\left\langle \chi_{_{\mathbb{R}_{\leq0}}}\left(m_{0}\right)\, U|\partial_{0}M\left(\partial_{0}^{-1}\right)U\right\rangle _{\rho,0,0}=\\
 &  & =\Re\left\langle \chi_{_{\mathbb{R}_{\leq0}}}\left(m_{0}\right)\, U|\partial_{0}M_{0}U\right\rangle _{\rho,0,0}+\Re\left\langle \chi_{_{\mathbb{R}_{\leq0}}}\left(m_{0}\right)\, U|M_{1}\left(\partial_{0}^{-1}\right)U\right\rangle _{\rho,0,0},\\
 &  & =\Re\left\langle \chi_{_{\mathbb{R}_{\leq0}}}\left(m_{0}\right)\,\sqrt{M_{0}}U|\partial_{0}\sqrt{M_{0}}U\right\rangle _{\rho,0,0}+\Re\left\langle \chi_{_{\mathbb{R}_{\leq0}}}\left(m_{0}\right)\, U|M_{1}\left(\partial_{0}^{-1}\right)\,\chi_{_{\mathbb{R}_{\leq0}}}\left(m_{0}\right)\, U\right\rangle _{\rho,0,0},\\
 &  & \geq\frac{1}{2}\int_{\mathbb{R}_{\leq0}}\left(\partial_{0}\left|\sqrt{M_{0}}U\right|_{0}^{2}\right)\left(t\right)\;\exp\left(-2\rho t\right)\: dt-\mu_{0}\left|\chi_{_{\mathbb{R}_{\leq0}}}\left(m_{0}\right)\, U\right|_{\rho,0,0}^{2},\\
 &  & \geq\frac{1}{2}\int_{\mathbb{R}_{\leq0}}\left(\partial_{0}\exp\left(-2\rho m_{0}\right)\left|\sqrt{M_{0}}U\right|_{0}^{2}\right)\left(t\right)\; dt+\\
 &  & +\rho\int_{\mathbb{R}_{\leq0}}\left|\sqrt{M_{0}}U\right|_{0}^{2}\left(t\right)\;\exp\left(-2\rho t\right)\: dt-\mu_{0}\left|\chi_{_{\mathbb{R}_{\leq0}}}\left(m_{0}\right)\, U\right|_{\rho,0,0}^{2},\\
 &  & \geq\frac{1}{2}\left|\sqrt{M_{0}}U\left(0\right)\right|_{0}^{2}+\left(\rho\gamma_{0}-\mu_{0}\right)\left|\chi_{_{\mathbb{R}_{\leq0}}}\left(m_{0}\right)U\right|_{\rho,0,0}^{2},\\
 &  & \geq\left(\rho\gamma_{0}-\mu_{0}\right)\left|\chi_{_{\mathbb{R}_{\leq0}}}\left(m_{0}\right)U\right|_{\rho,0,0}^{2}.
\end{eqnarray*}
Similarly we obtain, using $\Re\partial_{0}^{*}=\Re\partial_{0}=\rho$,
\begin{eqnarray*}
\Re\left\langle U|\partial_{0}^{*}M^{*}\left(\left(\partial_{0}^{-1}\right)^{*}\right)U\right\rangle _{\rho,0,0} & = & \Re\left\langle \sqrt{M_{0}}U|\partial_{0}^{*}\sqrt{M_{0}}U\right\rangle _{\rho,0,0}+\Re\left\langle U|M_{1}^{*}\left(\left(\partial_{0}^{-1}\right)^{*}\right)\, U\right\rangle _{\rho,0,0}\\
 & \geq & \rho\left\langle \sqrt{M_{0}}U|\partial_{0}\sqrt{M_{0}}U\right\rangle _{\rho,0,0}-\mu_{0}\left|\chi_{_{\mathbb{R}_{\leq0}}}\left(m_{0}\right)\, U\right|_{\rho,0,0}^{2}\\
 & \geq & \left(\rho\gamma_{0}-\mu_{0}\right)\left|U\right|_{\rho,0,0}^{2}.
\end{eqnarray*}
Thus \textbf{Condition (Positivity \ref{Condition-(Positivity I)})
}is satisfied with 
\[
\beta_{0}\;:=\rho_{0}\gamma_{0}-\mu_{0}
\]
where $\rho_{0}\in\mathbb{R}_{>0}$ is chosen such that
\[
\rho_{0}>\frac{\mu_{0}}{\gamma_{0}}.
\]

\end{proof}

\section{Conclusion}

We have presented an extension of a Hilbert space approach to a class
of evolutionary problems. As an illustration we have applied the general
theory to a particular problem concerning the propagation of acoustic
waves in complex media with a dynamic boundary condition. 
\selectlanguage{english}%

\end{document}